\numberwithin{equation}{section}
\newtheorem{Theorem}[equation]{Theorem}
\newtheorem{Proposition}[equation]{Proposition}
\newtheorem{Lemma}[equation]{Lemma}
\newtheorem{Conjecture}[equation]{Conjecture}
\newtheorem{Problem}[equation]{Problem}
\theoremstyle{definition}
\newtheorem{eg}[equation]{Example}
\newcommand{\fg}{\mathfrak{g}}
\newcommand{\fm}{\mathfrak{m}}
\newcommand{\NN}{\mathbb{N}}
\newcommand{\ZZ}{\mathbb{Z}}
\renewcommand{\phi}{\varphi}
\definecolor{red}{rgb}{1,0,0}
\definecolor{orange}{rgb}{1,0.5,0}
\definecolor{purple}{rgb}{.5,.2,.8}
\definecolor{blue}{rgb}{.2,.2,.8}
\definecolor{green}{rgb}{.4,.6,.4}
\DeclareMathOperator{\supp}{Supp}
\DeclareMathOperator{\mult}{mult}
\newcommand{\twomat}[4]{\begin{bmatrix}
        #1 & #2  \\[1em]
        #3 & #4 \\
     \end{bmatrix}}
\newcommand{\threemat}[9]{\begin{bmatrix}
        #1 & #2 & #3 \\[1em]
        #4 & #5 & #6 \\[1em]
        #7 & #8 & #9 
     \end{bmatrix}}
\newcommand{\suchthat}{\mid}
\newcommand{\real}{\mathrm{re}}
\newcommand{\fin}{\mathrm{fin}}
\newcommand{\height}{\mathrm{ht}}
\newcommand{\imag}{\mathrm{im}} 
\newcommand{\m}{\fm}
\newcommand{\s}{s}
\renewcommand{\sigma}{s}
\begin{document}

\title[Correction factors for Kac-Moody groups]{Correction factors for Kac-Moody groups and $t$-deformed root multiplicities}

\author{Dinakar Muthiah}
\address{Department of Mathematics and Statistics, University of Massachusetts Amherst, Amherst, MA 01003}
\email{muthiah@math.umass.edu}

\author{Anna Pusk{\'{a}}s}
\address{Department of Mathematics and Statistics, University of Massachusetts Amherst, Amherst, MA 01003}
\email{puskas@math.umass.edu}

\author{Ian Whitehead}
\address{Department of Mathematics, Statistics, and Computer Science, Macalester College, St. Paul, MN 55105}
\email{iwhitehe@macalester.edu}
\maketitle

\begin{abstract}
We study a correction factor for Kac-Moody root systems which arises in the theory of $p$-adic Kac-Moody groups. In affine type, this factor is known, and its explicit computation is the content of the Macdonald constant term conjecture. The data of the correction factor can be encoded as a collection of polynomials $m_\lambda \in \ZZ[t]$ indexed by positive imaginary roots $\lambda$. At $t=0$ these polynomials evaluate to the root multiplicities, so we consider $m_\lambda$ to be a $t$-deformation of $\mult (\lambda)$. We generalize the Peterson algorithm and the Berman-Moody formula for root multiplicities to compute $m_\lambda$. As a consequence we deduce fundamental properties of $m_\lambda$. 
\end{abstract}
\section{Introduction}\label{sect:intro}
In \cite{Macdonald72}, Macdonald proved the following remarkable identity for finite root systems $\Phi$:
\begin{align}
  \label{eq:15}
\sum_{w \in W} t^{\ell(w)} = \sum_{w \in W} w \left( \prod_{\alpha \in \Phi^+} \frac{1 - t e ^\alpha}{1 - e^\alpha} \right)
\end{align}
Here $W$ is the Weyl group, $\ell:W \rightarrow \ZZ_{\geq 0}$ is the length function, and $\Phi^+$ is the set of positive roots. Still more remarkably, this formula no longer holds for infinite type Kac-Moody root systems. Instead one has:
\begin{align}
  \label{eq:16}
\sum_{w \in W} t^{\ell(w)} = \m \sum_{w \in W} w \left(\prod_{\alpha \in \Phi_{\real}^+} \frac{1 - t e ^\alpha}{1 - e^\alpha}\right)
\end{align}
where $\Phi^+_{\real}$ is now the set of positive {\em{real}} roots, and the correction factor $\m$ is a $W$-invariant power series supported on the positive imaginary cone of the root lattice. In \cite{Macdonald03}, Macdonald computes $\m$ in the affine Kac-Moody case. An explicit formula for $\m$ can be obtained from the so-called constant term of the Macdonald kernel. In fact, the computation of $\m$ in affine type is equivalent to the Macdonald constant term conjecture \cite{Macdonald82}, proven by Cherednik \cite{Cherednik95}. We will review the formula for $\m$ in affine type in \S \ref{sect:affinecase}.

In this paper, our goal is to initiate the explicit study of the correction factor $\m$ beyond affine type to arbitrary Kac-Moody type. In \S \ref{sect:peterson} we give an algorithm to compute the power series $\m$, based only on knowledge of the real roots in $\Phi$. In \S \ref{sect:GenBermanMoody} we give a closed formula for $\m$. These tools are used to establish properties of $\m$ which hold across all finite, affine, and indefinite Kac-Moody types. 

In addition to its appearance in \eqref{eq:16} the power series $\m$ appears as a correction factor in many other formulas. In this sense $\m$ captures much of the complexity that arises when passing from finite to infinite type. Below, we briefly recall some situations where $\m$ appears. 

\subsubsection{$p$-adic groups and Macdonald's formula for the spherical function}

Our initial motivation for studying the factor $\m$ comes from the theory of $p$-adic Kac-Moody groups. In \cite{BGKP}, Braverman, Garland, Kazhdan, and Patnaik show that the discrepancy between the affine Gindikin-Karpelevich formula and the naive generalization from finite type is this factor $\m$. The same is true in Macdonald's formula for the spherical function \cite{BKP} and the Casselman-Shalika formula for the spherical Whittaker function \cite{Patnaik}. Work of Patnaik and the second author \cite{patnaik2017metaplectic} extends these formulas to metaplectic $p$-adic Kac-Moody groups, and $\m$ similarly appears. 

Gaussent and Rousseau have recently initiated the study of (not necessarily affine) $p$-adic Kac-Moody groups using their theory of {\em {masures}}, previously known as {\em{hovels}}. The work of  Bardy-Panse, Gaussent, Rousseau \cite[Theorem 7.3]{BGR} show that the role of $\m$ in affine type generalizes to arbitrary Kac-Moody type for the Macdonald formula for the spherical function. We expect that this should continue to hold for the Gindikin-Karpelevich and Casselman-Shalika formulas. We mention H\'ebert's \cite{Hebert-17} work on the well-definedness of Gindikin-Karpelevich formula in the general Kac-Moody setting.

The general principle appears to be that formulas from finite dimensional $p$-adic groups directly generalize to the Kac-Moody case, and the sole new phenomenon is the appearance of the factor $\m$.

\subsubsection{Symmetrizers for Kac-Moody Hecke algebras}
Identities relating Hecke symmetrizers, the Weyl-Kac characters formula, and the factor $\m$  appear in the study of the infinite-dimensional (ordinary and metaplectic) generalizations of the Casselman-Shalika formula for spherical Whittaker functions. The classical Casselman-Shalika formula expresses a spherical Whittaker function in terms of the character of a representation. By the Weyl character formula, this character can be produced by acting on a highest-weight monomial by a ``symmetrizer'', which is a certain sum over the Weyl group. 

A Hecke symmetrizer is a sum: 
\begin{equation}\label{eq:Hecke_symm}
\sum_{w\in W} T_w
\end{equation}
where the $T_w$ are Demazure-Lusztig operators. Each $T_w$ can be expanded in terms of Weyl group elements that are bounded by $w$ in the Bruhat order, giving rise to an identity relating Weyl and Hecke symmetrizers.

The spherical Whittaker function can be expressed in terms of an operator as in \eqref{eq:Hecke_symm} acting on a highest weight monomial in finite, affine, and indefinite Kac-Moody type. In the finite dimensional setting the sum \eqref{eq:Hecke_symm} satisfies an {\em{operator Casselman-Shalika formula}} \cite[\S 2.12]{Patnaik}, an identity of operators similar to the Demazure character formula, connecting the Whittaker function expressed with \eqref{eq:Hecke_symm} to its usual expression in terms of a Weyl character. In this case the correction factor is just $\m=1.$ 

The analogous identity between \eqref{eq:Hecke_symm} and the Weyl-Kac character in the affine and indefinite Kac-Moody setting \cite{Patnaik,patnaik2017metaplectic} involves the Weyl invariant correction factor $\m .$ In this case, both symmetrizers are infinite sums. Indeed, the proof that the Hecke symmetrizer \eqref{eq:Hecke_symm} yields a well-defined operator depends on its relation to the Weyl symmetrizer. The proof proceeds by comparing two similar versions of \eqref{eq:Hecke_symm} corresponding to spherical and Whittaker functions respectively. (See \cite[Section 3]{patnaik2017metaplectic} for details.)

\subsubsection{Dolbeault cohomology and the failure of the Hodge theorem}

As Macdonald explains in \cite{Macdonald72}, formula \eqref{eq:15} can be interpreted as a computation of the Betti numbers of a flag variety using Hodge theory. The left hand side is the computation given by counting Schubert cells. The right hand side corresponds to a computation of Dolbeault cohomology using localization at fixed points for the action of the maximal torus. Because the flag variety is smooth and projective, the Hodge theorem establishes that Dolbeault cohomology is equal to Betti cohomology. Equation \eqref{eq:15} results.

The fact that this argument no longer holds in Kac-Moody type is an indication that the Hodge theorem no longer holds for Kac-Moody flag varieties. This is very counterintuitive. It is known that Kac-Moody flag varieties are (ind-)projective. This means that they are not smooth for the purposes of the Hodge theorem. Because they are homogeneous for a group action, Kac-Moody flag varieties are in a sense everywhere singular.

In affine type, the situation is well-understood by the work of Fishel-Grojnowski-Teleman \cite{Fishel-Grojnowski-Teleman08}. They explicitly compute the Dolbeault cohomology of the affine flag variety. In particular, their work shows how the Hodge theorem fails in affine type. One of their results is a proof of the ``strong Macdonald conjecture,'' a conjecture on Lie algebra cohomology which implies the constant term conjecture.

The work of \cite{Fishel-Grojnowski-Teleman08} suggests that the generalization of the strong Macdonald conjecture to general Kac-Moody types should consist of an explicit computation of Dolbeault cohomology for Kac-Moody flag varieties. This should contain strictly more information than the factor $\m$ because one loses the bigrading of Dolbeault cohomology. In finite type only $(p,p)$ classes occur, so no information is lost, but in general other classes must occur. 

\subsection{Our work}

Our approach is based on the idea that the problem of computing $\m$ generalizes the problem of computing multiplicities of imaginary roots. In fact, when specialized at $t=0$, a product formula for $\m$ exactly encodes the imaginary root multiplicities. Our main results are two methods for computing this product formula. The methods are $t$-deformations of two existing approaches to computing imaginary root multiplicities: the Peterson algorithm \cite{Peterson83}, \cite{kang2001peterson}, \cite[Exercises 11.11, 11.12]{Kac90} and the Berman-Moody formula \cite{Berman-Moody79}.

Let us write $Q^+_{\imag}$ for the {\it positive imaginary root cone}, which consists of all elements of $Q_+$ (the positive root cone) which remain in $Q_+$ after applying any element of the Weyl group. Our generalized Peterson algorithm is a recursive procedure that allows us to express:
\begin{align}
  \label{eq:2}
\m=\prod_{\lambda \in Q^+_{\imag}} \prod_{n\geq0}(1-t^n e^{\lambda})^{-m(\lambda, n)}
\end{align}
For fixed $\lambda$, the $m(\lambda,n)$ are integers that vanish for $n$ sufficiently large (the bound depends on $\lambda$). The exponent $m(\lambda,0)$ is the multiplicity of $\lambda$. In particular it lies in $\ZZ_{\geq 0}$ and it is non-zero precisely when $\lambda$ is a root.  Our generalized Peterson algorithm is fast, depending only on operations with polynomials. We have implemented it in SageMath, and a small sample of the output is included in the appendix.

It is convenient to package the coefficients $m(\lambda,n)$ into a polynomial $\displaystyle m_\lambda(t) = \sum_{n\geq0} m(\lambda,n) t^n$. The generalized Berman-Moody formula is the following closed expression for the polynomial $m_\lambda(t)$.

\newtheorem*{thm:BermanMoody}{Theorem \ref{thm:BermanMoody}}
\begin{thm:BermanMoody}
For all $\lambda \in Q_{\imag}^+$, we have: 
\begin{equation} \label{eq:multformula-intro}
m_\lambda(t) = \sum_{\kappa | \lambda} \mu\left(\lambda/\kappa\right)\left(\frac{\lambda}{\kappa}\right)^{-1} \sum_{\underline{\kappa}\in \text{Par}(\kappa)} (-1)^{|\underline{\kappa}|}\frac{B(\underline{\kappa})}{|\underline{\kappa}|} \prod_{i=1}^{|\underline{\kappa}|} P_{\kappa_i}(t^{\lambda/\kappa})
\end{equation}
\end{thm:BermanMoody}

We refer the reader to \S \ref{sect:GenBermanMoody} for an explanation of the symbols appearing here. Although not as efficient as the generalized Peterson algorithm, this formula is useful for answering theoretical questions about $\m$. For example, we prove the following:

\newtheorem*{thm:roots_only}{Theorem \ref{thm:roots_only}}
\begin{thm:roots_only}
For all $\lambda \in Q^+_{\imag}$, $m_\lambda(t)$ is nonzero if and only if $\lambda$ is a root. 
\end{thm:roots_only}

\newtheorem*{thm:order2vanishing}{Theorem \ref{thm:order2vanishing}}
\begin{thm:order2vanishing} 
For imaginary roots $\lambda$, the polynomial $m_\lambda(t)$ is divisible by $(1-t)^2$.
\end{thm:order2vanishing}

We note that in affine type Theorem \ref{thm:order2vanishing} is inherent in Macdonald's formatting of the constant term conjecture. Moreover, in affine type, the polynomial $\chi_\lambda(t) = \frac{m_\lambda(t)}{(1-t)^2}$ is more fundamental. It has positive coefficients which are manifestly equal to the cardinalities of certain finite sets. For example, in untwisted affine type, if $\lambda$ is imaginary then $\chi_\lambda(t)$ is the generating polynomial of heights of positive roots of the underlying finite root system (see \S \ref{sect:affinecase} for details).

Beyond affine type the polynomials $\chi_\lambda$ no longer have positive coefficients. However, from computer calculation via the generalized Peterson algorithm, we conjecture the following.
\begin{Conjecture} \label{conj:alternating}
In rank two hyperbolic type, the polynomials $\chi_\lambda$ have alternating sign coefficients. 
\end{Conjecture}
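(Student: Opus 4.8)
The plan for attacking Conjecture \ref{conj:alternating} is to induct on the positive imaginary cone, using the generalized Peterson recursion of \S\ref{sect:peterson} specialized to rank two. Recall that in a rank two hyperbolic type the positive imaginary cone $Q^+_{\imag}$ consists of the lattice points $\lambda = p\alpha_1 + q\alpha_2$ lying weakly between the two extreme rays of the light cone (equivalently $(\lambda,\alpha_1)\le 0$ and $(\lambda,\alpha_2)\le 0$), ordered by $\mu \preceq \lambda$ iff $\lambda - \mu \in Q^+$; since every proper decomposition $\lambda = \mu + \nu$ occurring in the recursion has $\mu,\nu$ of strictly smaller height $p+q$, induction on height is legitimate. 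The first step is bookkeeping: by Theorem \ref{thm:order2vanishing} we may write $\chi_\lambda(t) = m_\lambda(t)/(1-t)^2$, and we must rewrite the $t$-deformed Peterson recursion entirely in terms of the $\chi_\lambda$. After clearing the denominators $(1-t)^2$ and the ``diagonal'' scalar of the form $(\lambda,\lambda-2\rho)$, the outcome should be an identity expressing $\chi_\lambda(t)$ as an explicit $\ZZ[t]$-combination of convolution products $\chi_\mu(t)\chi_\nu(t)$ over decompositions $\lambda = \mu+\nu$, together with lower-order ``divided'' terms; verifying that this rewriting is consistent with the extra factor of $(1-t)^2$ is itself part of the work.

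The inductive step rests on the elementary but crucial observation that the class of \emph{alternating} polynomials --- those $f \in \ZZ[t]$ with $(-1)^i[t^i]f \ge 0$ for all $i$, equivalently those with $f(-t)$ having nonnegative coefficients --- is closed under addition and under multiplication (because $(-1)^i(-1)^j = (-1)^{i+j}$), and that multiplication by $(1-t)$, $(1-t)^2$, or $(1+t+\cdots+t^k)$ behaves predictably. Thus the induction closes provided that, in the $\chi$-form of the recursion, every structural coefficient carries the sign dictated by its $t$-degree parity. The two inputs I would use here are (i) the negativity of the bilinear form on the imaginary cone, namely $(\mu,\nu) \le 0$ for all $\mu,\nu \in Q^+_{\imag}$ (immediate in rank two, where the form is Lorentzian and $Q^+_{\imag}$ is the interior of one half of the light cone), and (ii) a parity analysis of the $(1-t)^2$ that has been divided out. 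The base of the induction --- imaginary roots of small height --- can be checked directly, e.g. from the SageMath output in the appendix. It would be most satisfying to organize the argument so that $(-1)^i[t^i]\chi_\lambda(t)$ is exhibited as the cardinality of an explicit finite set, mirroring the affine picture recalled in \S\ref{sect:affinecase}.

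The step I expect to be the main obstacle is controlling the decompositions $\lambda = \mu + \nu$ in which $\mu$ or $\nu$ lies outside $Q^+_{\imag}$ --- a real root, or a non-root element of $Q^+$ --- for which $(\mu,\nu)$ need not be $\le 0$ and the term-by-term sign argument breaks down. (One cannot sidestep this by proving only the weaker statement that $m_\lambda(t)$ is alternating and then dividing: $(1+t)^2 g$ can have nonnegative coefficients while $g$ does not alternate.) I see two plausible routes. The first is to show that the total contribution of the ``non-imaginary'' decompositions reorganizes --- via the denominator identity already underlying the derivation of the generalized Peterson algorithm --- into a manifestly alternating expression. The second is to pass instead to the generalized Berman--Moody formula \eqref{eq:multformula-intro}: in rank two, for primitive $\lambda$ the divisor sum collapses to the single term $\kappa = \lambda$, leaving $m_\lambda(t) = \sum_{\underline{\lambda}\in\text{Par}(\lambda)} (-1)^{|\underline{\lambda}|}\tfrac{B(\underline{\lambda})}{|\underline{\lambda}|}\prod_i P_{\lambda_i}(t)$, and one would try to match the combinatorial sign $(-1)^{|\underline{\lambda}|}$ against the parity of $\deg \prod_i P_{\lambda_i}$ (and handle non-primitive $\lambda$ through the Möbius factors). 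Controlling the interference between these two independent sources of signs is, I believe, the real content of the conjecture, and is the point at which a genuinely new positivity input --- perhaps for a $t$-deformed analogue of the finite-type height polynomial, or for a suitable Lie algebra homology --- will likely be required.
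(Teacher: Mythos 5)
There is a genuine gap, and it is one you yourself flag: this statement is stated in the paper as a \emph{conjecture}, supported only by computer calculation via the generalized Peterson algorithm, and your text is a research plan rather than a proof. The decisive step --- showing that in the $\chi$-form of the recursion (or in the rank-two Berman--Moody formula) the combinatorial sign $(-1)^{|\underline{\lambda}|}$ always agrees with the parity of $\deg\prod_i P_{\lambda_i}$, so that no cancellation of the wrong sign can occur --- is precisely what you do not carry out, and you concede that ``a genuinely new positivity input will likely be required.'' Closure of the class of alternating polynomials under addition and multiplication only helps if \emph{every} term entering the recursion alternates with a consistent overall sign; the tabulated data (e.g.\ $\chi_{(4,4)} = -t^5+t^4-2t^3+3t^2-6t+3$ versus $\chi_{(3,2)}=t^2+2$, whose leading signs differ) shows that the terms being convolved do not all alternate ``in phase,'' so the naive inductive step does not close without a finer accounting of which decompositions contribute with which sign. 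Your second route is more promising in rank two, since there $P(t)=(1+t)/(1-t)$ and $P_{\mathrm{supp}(\underline{k})}(t)=t^{N}/(1-t)$ make the individual summands of \eqref{eq:plambda} honest polynomials, but the interference between the M\"obius factors, the signs $(-1)^{|\underline{\kappa}|}$, and the division by $(1-t)^2$ is exactly the open content of the conjecture.

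Two smaller points. First, your description of $Q^+_{\imag}$ in rank two conflates the light cone with the antidominant subcone: the condition $\langle\lambda,\alpha_i\rangle\le 0$ for both $i$ cuts out only a fundamental domain for $W$ inside $Q^+_{\imag}$, whereas the full imaginary cone is $\{\lambda\in Q^+ : \langle\lambda,\lambda\rangle\le 0\}$; the tables (which list both $(2,3)$ and $(3,2)$, etc.) make the distinction visible. Second, any successful argument must break beyond rank two, since the paper notes the alternating-sign property fails there --- your use of the two-chain structure of $\Phi^+_{\real}$ and the closed form of $P(t)$ is the right kind of rank-two input, but it has to be leveraged into an actual sign-matching lemma before this can be called a proof.
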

Beyond rank two, this does not hold. It is conceivable that the positivity phenomena of affine type is present in the general $\chi_\lambda$ in a more subtle form.

\subsection{Further directions}

The constant term of the polynomial $\chi_\lambda$ is precisely the multiplicity of the root $\lambda.$ It must be positive as it is the dimension of the corresponding root space in the Kac-Moody Lie algebra. It would be illuminating to give a similar interpretation for the other coefficients.

\begin{Problem}
Interpret all coefficients of $\chi_\lambda$ in terms of the Kac-Moody Lie algebra.
\end{Problem}

Computing imaginary root multiplicities is a well-known difficult problem. We propose that one should try to instead study $\chi_\lambda$, which of course contains the information of root multiplicities. Our hope is that this generalized problem might be more tractable. For example, a long-standing problem is Frenkel's conjecture, which provides a conjectural bound for hyperbolic root multiplicities. A natural question is to phrase (and prove) a generalization of this conjecture.

\begin{Problem}
Give upper bounds for the degree and coefficients of $\chi_\lambda(t)$ in terms of $\lambda$.
\end{Problem}

\subsection{Acknowledgments}

We thank Alexander Braverman, Paul E. Gunnells, Kyu-Hwan Lee, Dongwen Liu, Peter McNamara, Manish Patnaik for helpful conversations. At the beginning of this project the first author was partially supported by a PIMS postdoctoral fellowship and the second author was supported through Manish Patnaik's Subbarao Professorship in Number Theory and an NSERC Discovery Grant at the University of Alberta. The project started at the workshop ``Whittaker functions: Number Theory, Geometry and Physics'' at the Banff International Research Station in 2016; we thank the organizers of this workshop. 


\section{Preliminaries and Notation}\label{sect:prelim}

\subsection{The inverse with respect to a cone}
We begin by introducing some terminology about lattices, cones, and series supported on subsets of them. Let $Q$ be a lattice, and let $Q^+ \subseteq Q$ be a subset. We say $Q^+$ is a \emph{cone} if it is closed under taking linear combinations with non-negative integer coefficients. In particular a cone contains $0.$ For every $\gamma \in Q^+$ one may consider the set of pairs $(\alpha,\beta) \in (Q^+)^2$ such that $\gamma = \alpha + \beta$. We call $Q^+$ a \emph{strict cone} if the set of such pairs is finite for every element. We say that $Q^+$ is \emph{graded} if there is an additive height function $\text{ht}:Q^+ \to \mathbb{Z}_{\geq 0}$ such that 
the set of vectors in $Q^+$ of a fixed height is finite. Note that a graded cone is strict. We call $Q^+$ \emph{saturated} if $Q^+ \otimes \mathbb{R}_{\geq0} \cap Q = Q^+$. Finally, we say that a saturated cone $Q^+$ has \emph{full rank} if $Q^+$ generates the lattice $Q$. In this case $Q=\{\alpha-\beta\mid \alpha, \beta\in Q^+\}.$

Consider a formal sum of exponentials $f=\sum_{\lambda\in Q} c_{\lambda}e^{\lambda}$ with coefficients $c_{\lambda}$ in a ring $R$. We say the sum $f$ has support $\supp(f)=\lbrace \lambda \in Q\mid  c_{\lambda} \neq 0 \rbrace$. If $\supp(f)\subseteq Q^+$ we call $f$ a power series with respect to $Q^+$ and write $f\in R[[Q^+]]$. If the support of $f$ is contained in the translate of $Q^+$ by some $\lambda_0 \in Q$, then we call $f$ a Laurent series with respect to $Q^+$. If $Q^{+}$ is a strict cone then the power series and Laurent series with respect to $Q^+$ form a ring. If $Q^{+}$ has full rank then if $\supp(f)$ is finite then $f$ is a Laurent series. 

The graded cone condition allows us to characterize units and inverses in $ R[[Q^+]].$ 
A power series of the form $f=\sum_{\lambda \in Q^+} c_{\lambda} e^{\lambda}$ is a unit if and only if $c_0 \in R$ is a unit. The inverse $f^{-1}$ can be constructed explicitly via induction on height. We will use a more general version of this. Suppose that $Q^+$ is a graded cone and $Q^{++}$ is a subcone. We denote the restriction of a power series $f=\sum_{\lambda \in Q^+} c_{\lambda} e^{\lambda}$ to $Q^{++}$ as $f|_{Q^{++}}=\sum_{\lambda\in Q^{++}} c_{\lambda} e^{\lambda}$. We have the following:

\begin{Proposition} \label{prop:relativeinverse} Suppose that $Q^+$ is a graded cone, and $Q^{++}$ is a subcone.  If $f\in R[[Q^+]]$ is a unit, then there exists a unique 
 $g\in R[[Q^{++}]]$ such that such that $(fg)|_{Q^{++}}=1$. Furthermore, $g$ is a unit in $R[[Q^{++}]]$. \end{Proposition}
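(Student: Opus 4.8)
The plan is to construct $g=\sum_{\mu\in Q^{++}}d_\mu e^\mu$ by recursion on the height function. First observe that $Q^{++}$, being a subcone of the graded cone $Q^+$, is itself graded: the restriction of $\height$ to $Q^{++}$ is additive, and its fiber over any integer is contained in the corresponding finite fiber in $Q^+$. In particular $Q^{++}$ is strict, and, because $Q^+$ is strict, the product $fg$ is a well-defined element of the ring $R[[Q^+]]$. I also record the elementary fact that $\height(\lambda)=0$ implies $\lambda=0$ on $Q^+$, since otherwise $\{n\lambda\mid n\ge 0\}$ would be an infinite subset of a single fiber of $\height$. Consequently, in any decomposition $\gamma=\alpha+\beta$ with $\alpha\in Q^+$ and $\beta\in Q^{++}$ we have $\height(\alpha)+\height(\beta)=\height(\gamma)$, with $\height(\beta)=\height(\gamma)$ exactly when $\alpha=0$ and $\beta=\gamma$.

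With this in hand, for $\gamma\in Q^{++}$ the coefficient of $e^\gamma$ in $fg$ is
\[
c_0 d_\gamma+\sum_{\substack{\alpha+\beta=\gamma\\ \alpha\in Q^+,\ \beta\in Q^{++}\\ \height(\beta)<\height(\gamma)}}c_\alpha d_\beta ,
\]
a finite sum since $Q^+$ is strict. Requiring this to equal $1$ when $\gamma=0$ and $0$ otherwise, and using that $c_0$ is a unit in $R$, we are forced to take $d_0=c_0^{-1}$ and, for $\height(\gamma)>0$, to define $d_\gamma$ by an explicit expression in the finitely many $d_\beta$ with $\height(\beta)<\height(\gamma)$. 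Because $\height$ takes values in $\ZZ_{\ge 0}$ this recursion is well-founded, so it determines a unique $g\in R[[Q^{++}]]$. By construction $(fg)|_{Q^{++}}=1$, which gives existence; conversely the coefficients of any solution must satisfy the same recursion, which gives uniqueness. (Equivalently, if $(fh)|_{Q^{++}}=0$ for a nonzero $h=\sum_\mu e_\mu e^\mu\in R[[Q^{++}]]$, pick $\mu_0$ of minimal height with $e_{\mu_0}\ne 0$; then the coefficient of $e^{\mu_0}$ in $fh$ is $c_0 e_{\mu_0}\ne 0$, a contradiction.)

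Finally, $g$ is a unit in $R[[Q^{++}]]$: its constant coefficient $c_0^{-1}$ is a unit in $R$, and $Q^{++}$ is a graded cone, so the criterion for units in power series over a graded cone --- recorded above for $Q^+$ --- applies verbatim. The argument is essentially bookkeeping; the only points that genuinely require care are the well-foundedness of the height recursion (this is precisely where the graded, rather than merely strict, hypothesis is used) and the finiteness of the convolution sums defining the products (which is strictness). I do not expect any serious obstacle.
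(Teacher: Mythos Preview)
Your proof is correct and follows exactly the approach the paper indicates: the paper merely remarks that ``the series $g$ may be constructed explicitly by induction on height'' and that uniqueness is ``manifest from the construction,'' and you have carefully carried out precisely this induction. Your justification that $\height(\lambda)=0$ forces $\lambda=0$ (so that the recursion really decreases height) and your verification that $g$ is a unit via its constant term $c_0^{-1}$ are the right details to fill in.
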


The series $g$ may be constructed explicitly by induction on height. It is manifest from the construction that $g$ is unique. We call $g$ {\it the inverse of $f$ relative to $Q^{++}$}. The ring of Laurent series has analogous constructions; there a unit has the form $f=\sum_{\lambda \in Q^+} c_{\lambda} e^{\lambda+\lambda_0}$, with $\lambda_0\in Q$ and $c_0$ a unit in $R$.

\subsection{Weyl action and the positive imaginary cone}

Let $\Phi$ be a Kac-Moody root system. We refer the reader to \cite{Kac90}, especially Chapter 5, for details on such root systems. Let $\alpha_1, \ldots \alpha_r$ denote the simple roots of $\Phi,$ $\sigma_1, \ldots \sigma_r$ the corresponding simple reflections and let $W$ be the Weyl group. Let $Q$ be the root lattice, and $Q^+$ the set of positive integer linear combinations of simple roots. Then $Q^+$ is a saturated graded cone of full rank in $Q$. We shall consider rings of power series and Laurent series with coefficients in several different rings: $\ZZ$, $\ZZ[t]$, $\ZZ[t, t^{-1}]$ and $\ZZ[[t]][t^{-1}]$.

A Weyl group element $w\in W$ acts on a formal sum $f=\sum_{\lambda} c_{\lambda}e^{\lambda}$ by $w(f)=\sum_{\lambda} c_{\lambda}e^{w(\lambda)}$. This action takes a power series (resp. a Laurent series) with respect to $Q^+$ to a power series (resp. Laurent series) with respect to $w(Q^+)$. A certain subring of power series (resp. Laurent series) with respect to  $Q^+$ are also power series (resp. Laurent series) with respect to $w(Q^+)$ for all $w \in W.$ A priori, we only have a well-defined $W$-action on this subring.

We define the positive imaginary cone $Q^+_{\imag}$ as the cone generated by positive imaginary roots, and call $\lambda \in Q^+$ imaginary if it lies in this cone. 
We can make a similar definition for $Q^-_{\imag}$. The imaginary vectors form $0$ and $1$ dimensional subspaces in finite and affine type. In indefinite type by \cite[Proposition 5.8, Theorem 5.4]{Kac90} the positive imaginary roots generate a saturated graded cone of full rank. This cone is $Q^+_{\imag}$. It is $W$-invariant; indeed it is the largest $W$-invariant subcone of $Q^+$ \cite[Theorem 5.4]{Kac90}. That is, every non-imaginary vector $\lambda$ is $W$-equivalent to one outside $Q^+$. By contrast, each imaginary $\lambda$ is $W$-equivalent to a vector in $\lambda' \in Q^+$ of minimal height, which is necessarily antidominant, i.e.\ $\langle \lambda, \alpha_i \rangle \leq 0$ for all simple roots $\alpha_i$. If $\lambda \in Q^+$ satisfies $\langle \lambda, \lambda \rangle >0$, then $\langle \lambda, \alpha_i \rangle >0$ for some simple root $\alpha_i$, and since $\langle \lambda, \lambda \rangle$ is $W$-invariant, we see then that $\lambda$ is imaginary only if $\langle \lambda, \lambda \rangle \leq 0$. This condition is sufficient in finite, affine, and hyperbolic type \cite[Proposition 5.10]{Kac90} but not in general.

We define the imaginary part of a formal sum $f=\sum_{\lambda} c_{\lambda}e^{\lambda}$ as $\text{Im}(f)=f|_{Q^+_{\imag}}$, and call $f$ imaginary if $\supp(f)\subseteq Q^+_{\imag}$. In light of the discussion above, we have $Q^+_{\imag}=\bigcap_{w\in W}w(Q^+)$, so $R[[Q^+_{\imag}]]$ is the largest subring of $R[[Q^+]]$ with a $W$-action. 
The situation with Laurent series is somewhat more subtle. A formal sum which is a Laurent series with respect to $w(Q^+)$ for each $w\in W$ may not be a Laurent series with respect to $Q^+_{\imag}$. This is clear in finite and affine type and it persists in indefinite type. For a counterexample consider a series $\sum_{\alpha} e^{n_{\alpha} \alpha}$, where the $\alpha$ are positive real roots, and the vectors $n_{\alpha}\alpha$ grow arbitrarily far from the imaginary cone. 

\subsection{Good products}

We wish to extend the $W$-action to a larger class of Laurent series units. We shall make use of the following proposition, which allows us to represent a Laurent series unit as a product of linear terms. 

\begin{Proposition}\label{prop:seriestoproduct} Suppose that $f$ is a Laurent series unit with respect to a graded cone $Q^+$, with coefficients in $\ZZ[[t]][t^{-1}]$. Then $f$ has a unique product form:
\begin{equation} \label{eq:productform}
f=ue^{\lambda_0}\prod_{\lambda \in Q^+\setminus \lbrace0 \rbrace} \prod_{n} (1-t^n e^{\lambda})^{m(\lambda, n)}
\end{equation}
where $m(\lambda, n)\in \ZZ$ and the set $\{n\in \ZZ\mid m(\lambda, n)\neq 0\}$ is bounded from below for each $\lambda$. If furthermore $f$ has coefficients in $\ZZ[t, t^{-1}]$, $\ZZ[t],$ or $\ZZ$ then this set is a bounded subset of $\ZZ,$ $\ZZ_{\geq 0},$ or $\{0\},$ respectively. 
\end{Proposition}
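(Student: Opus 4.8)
The plan is to strip off a leading monomial and a unit so as to reduce to a power series with constant term $1$, then build the exponents $m(\lambda,n)$ by induction on $\height(\lambda)$, deduce uniqueness from a comparison of lowest-height coefficients, and finally obtain the three refinements by re-running the construction while tracking the coefficient ring. For the reduction: by the description of a Laurent series unit recalled above, $f=\sum_{\lambda\in Q^+}c_\lambda e^{\lambda+\lambda_0}$ with $c_0$ a unit, so $\lambda_0\in\supp(f)$ and $\supp(f)\subseteq\lambda_0+Q^+$; since $Q^+$ is a graded cone we have $\height^{-1}(0)\cap Q^+=\{0\}$ (a nonzero vector of height $0$ would have infinitely many multiples of height $0$), so $\lambda_0$ is the unique minimal element of $\supp(f)$ and $u:=c_0$ is forced. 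Writing $f=ue^{\lambda_0}\tilde f$ with $\tilde f$ a power series of constant term $1$, it suffices to show that $\tilde f$ has a unique product form $\prod_{\lambda\neq0}\prod_n(1-t^ne^\lambda)^{m(\lambda,n)}$ with $m(\lambda,n)\in\ZZ$ and $\{n:m(\lambda,n)\neq0\}$ bounded below.

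For existence I would define the $m(\lambda,n)$ recursively in $k=\height(\lambda)$ so that $P_{\le k}:=\prod_{0<\height(\mu)\le k}\prod_n(1-t^ne^\mu)^{m(\mu,n)}$ satisfies $\tilde f\,P_{\le k}^{-1}\equiv 1$ modulo the ideal of series supported in heights $>k$; this is vacuous for $k=0$. Given the exponents through height $k-1$, set $g=\tilde f\,P_{\le k-1}^{-1}$, so the invariant gives $g=1+\sum_{\height(\gamma)\ge k}b_\gamma e^\gamma$ with $b_\gamma\in\ZZ[[t]][t^{-1}]$, and for $\height(\lambda)=k$ let $m(\lambda,n)$ be the negative of the coefficient of $t^n$ in $b_\lambda$ --- an integer that vanishes once $n$ drops below the $t$-valuation of $b_\lambda$. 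Expanding modulo heights $>k$ shows $\prod_{\height(\lambda)=k}\prod_n(1-t^ne^\lambda)^{-m(\lambda,n)}\equiv 1-\sum_{\height(\lambda)=k}b_\lambda e^\lambda$, which is exactly what restores the invariant at level $k$. Letting $k\to\infty$, the $P_{\le k}$ converge in $\ZZ[[t]][t^{-1}][[Q^+]]$ to $\tilde f$, and the doubly indexed product converges to the same element: factors with $\height(\lambda)>k$ contribute nothing in heights $\le k$, and for fixed $\lambda$ the factor $(1-t^ne^\lambda)^{m(\lambda,n)}$ tends $t$-adically to $1$ as $n\to\infty$. This step relies on $\ZZ[[t]][t^{-1}][[Q^+]]$ being complete for the topology that bounds height from above and $t$-valuation from below, which is routine to check.

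For uniqueness, comparing the unique minimal support element of two product forms for $f$ forces $\lambda_0=\lambda_0'$ and $u=u'$; after cancelling and inducting on height, once the factors of height $<k$ are known to agree the coefficient of $e^\lambda$ for $\height(\lambda)=k$ equals $-\sum_n m(\lambda,n)t^n$ on one side and $-\sum_n m'(\lambda,n)t^n$ on the other --- no factor of height $>k$, and no height-$k$ factor attached to a $\mu\neq\lambda$, can reach $e^\lambda$ --- hence $m(\lambda,\cdot)=m'(\lambda,\cdot)$.

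For the refinements I would read each refined hypothesis as ``$f$ is a Laurent series unit over the smaller ring,'' so that $u=c_{\lambda_0}$ is a unit there; then $u=\pm1$ when the ring is $\ZZ$ or $\ZZ[t]$ and $u=\pm t^k$ when it is $\ZZ[t,t^{-1}]$, and in every case $\tilde f$ again has coefficients in that ring. Now re-run the inductive construction inside $R[[Q^+]]$ for $R\in\{\ZZ,\ZZ[t],\ZZ[t,t^{-1}]\}$. The essential observation is that for every $a\in\ZZ$ the series $(1-t^ne^\mu)^a$ has coefficients that are single monomials $\pm\binom{a}{j}t^{nj}$ (or $\pm\binom{|a|+j-1}{j}t^{nj}$) in $e^{j\mu}$, so a \emph{finite} product of such factors, together with its inverse, has coefficients in $\ZZ$ when every exponent uses $n=0$, in $\ZZ[t]$ when every $n\ge0$, and in $\ZZ[t,t^{-1}]$ in general. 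Carrying as inductive hypothesis on $\height(\lambda)$ that for $\height(\mu)<\height(\lambda)$ the set $\{n:m(\mu,n)\neq0\}$ is $\{0\}$, resp.\ a finite subset of $\ZZ_{\ge0}$, resp.\ a finite subset of $\ZZ$, the finitely many such $\mu$ make $P_{<\height(\lambda)}^{\pm1}$ lie in $R[[Q^+]]$; hence the coefficient $b_\lambda$ of $e^\lambda$ in $\tilde f\,P_{<\height(\lambda)}^{-1}$ lies in $R$, and $m(\lambda,n)$ has exactly the claimed support. By the uniqueness just proved, these are the exponents of the original product form. I expect this last bookkeeping --- the observation that finitely many exponents, finitely supported and nonnegative, keep the relevant partial products and their inverses polynomial in $t$ --- to be the only genuinely delicate point, the rest being a mechanical height induction together with the topological setup needed to make the infinite products in the existence step converge.
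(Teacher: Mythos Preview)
Your proposal is correct and follows exactly the height-induction strategy the paper indicates in a single sentence (``It can be shown by induction on $\height(\lambda)$ that every Laurent series is the expansion of some product of the form \eqref{eq:productform}, and that two distinct products of this form yield distinct series''). Your version supplies the details the paper omits, in particular the convergence of the inner product over $n$ and the bookkeeping that propagates the coefficient ring through the induction for the three refinements.
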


%

The strict cone condition implies that the Laurent series expansion of an expression of type \eqref{eq:productform} is well-defined. It can be shown by induction on $\height(\lambda)$ that every Laurent series is the expansion of some product of the form \eqref{eq:productform}, and that two distinct products of this form yield distinct series.

We call a product of the form \eqref{eq:productform} a {\emph{good product}} if all vectors $\lambda$ in the product are multiples of roots $\alpha\in \Phi$, and the set of factors corresponding to any real root $\alpha$ is finite. Good products form a multiplicative subgroup of all Laurent series units. Next we define a Weyl group action on this set. For $\lambda \in \mathbb{N} \Phi$, let:
\begin{equation}
\label{eq:4}
w(1 - t^n e^\lambda) = 
\left\lbrace \begin{array}{cc}
1 - t^n e^{w(\lambda)} & \text{if } w(\lambda) > 0 \\
(- t^n e^{w(\lambda)}) (1 - t^{-n} e^{-w(\lambda)}) & \text{if } w(\lambda) < 0
\end{array} \right.
\end{equation}  
This gives an action of $W$ on linear factors which extends to all good products by multiplicativity. The restriction to multiples of roots is necessary so that $w(\lambda)$ is either in $Q^+$ or in $Q^-$. The finiteness condition guarantees that the $w$-image of a good product is a well-defined Laurent series, since the inversion set $\Phi(w)=\Phi^+ \cap w^{-1}(\Phi^-)$ is finite. Earlier, we defined a $W$-action on formal sums of exponentials. That action respects Laurent series multiplication. It follows that the two $W$-actions agree when both make sense. However, the action on good products is well-defined in some cases where the action on Laurent series is not. For the simplest example consider  $\sigma_i((1-e^{\alpha_i})^{-1})$. It is the good product action which allows us to interpret the definition of $\m$ in \eqref{eq:16}. 


\section{A Formal Definition of the Factor $\m$}\label{sect:thefactorm}
\renewcommand{\P}{\Omega}

\subsection{The definition of $\m$}
Let $\P_0$, $\P_1$, and $\P_2$ denote the power series rings with respect to the positive root lattice $Q^+$, with coefficients in $\ZZ$, $\ZZ[t, t^{-1}]$, and $\ZZ[[t]][t^{-1}]$ respectively. 

We wish to use \eqref{eq:16} as the definition of $\m$. However, it is not {\it a priori} clear what type of object $\m$ is. Below, we will define $\m$ as an element of $\P_2$ and later show that it lies in $\P_1$.

Let:
\begin{equation}
\Delta_{\real}=\prod_{\alpha \in \Phi^+_{\real}}(1-e^{\alpha}), \qquad \Delta_{t, \real}=\prod_{\alpha \in \Phi^+_{\real}}(1-t e^{\alpha})
\end{equation}
power series in $\P_0$ and $\P_1$ respectively. The quotient $\frac{\Delta_{t, \real}}{\Delta_{\real}}$ is a unit and a good product in $\P_1$. Let $P(t)\in \ZZ[[t]]$ be the Poincar\'{e} series $P(t)=\sum_{w \in W} t^{\ell(w)}$, where $\ell(w)$ denotes the length of the Weyl group element $w$ in the simple reflections. 

\begin{Lemma}
  Consider the sum
  \begin{align}
    \label{eq:3}
    \sum_{w\in W} w\left( \frac{\Delta_{t, \real}}{\Delta_{\real}}\right) 
  \end{align}
  interpreting each term by the good product action of $W$. It converges to a unit in $\P_2$ that is regular at $t=0$. Its constant coefficient is $P(t)$.  
\end{Lemma}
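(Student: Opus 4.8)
The plan is to prove the three assertions — convergence to a unit in $\Omega_2$, regularity at $t=0$, and identification of the constant term with $P(t)$ — essentially in that order, by carefully tracking which cone each summand is supported on and combining this with the classical finite-type identity \eqref{eq:15}.

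\textbf{Convergence.} First I would fix a height $h$ and argue that only finitely many $w \in W$ contribute to the coefficients $c_\lambda$ with $\height(\lambda) \le h$ in the expansion of $\sum_w w(\Delta_{t,\real}/\Delta_{\real})$. The point is that, under the good product action \eqref{eq:4}, applying $w$ to $\Delta_{t,\real}/\Delta_{\real}$ produces a monomial prefactor $e^{\rho_w}$ coming from the real roots $\alpha \in \Phi(w^{-1}) = \Phi^+ \cap w(\Phi^-)$ that get sent to negative roots (each contributing a factor $(-t^n e^{w\alpha})$), times a genuine power series in $\Omega_2$ with constant term $1$ in the $e^\lambda$-grading. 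The height of $\rho_w = \sum_{\alpha \in \Phi(w^{-1})} w(-\alpha)$ grows without bound as $\ell(w) \to \infty$ — this is the standard fact that $w(Q^+)$ marches off toward the imaginary cone, and more precisely that $\height$ of the "leading exponent'' of $w(\Delta_{t,\real}/\Delta_{\real})$ is bounded below by a function of $\ell(w)$ tending to infinity. Hence for each $h$ only finitely many $w$ have leading exponent of height $\le h$, so the sum converges coefficientwise in $\Omega_2$. I expect \emph{this} to be the main obstacle: one must pin down the precise sense in which the supports of the summands escape to infinity, and verify that the $t$-adic coefficients stay in $\ZZ[[t]][t^{-1}]$ with denominators controlled (so that the limit is genuinely in $\Omega_2$, i.e.\ a Laurent series in $t$ with $\ZZ[[t]]$ coefficients, not something worse).

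\textbf{Regularity at $t=0$ and the unit property.} Setting $t = 0$, each factor $(1 - t^n e^{w\lambda})$ with $n > 0$ becomes $1$, and $(1 - t^0 e^{w\lambda}) = (1-e^{w\lambda})$; tracking this through \eqref{eq:4} one sees $w(\Delta_{t,\real}/\Delta_{\real})|_{t=0}$ is exactly the good-product interpretation of $w(\prod_{\alpha \in \Phi^+_{\real}} (1-e^\alpha)^{-1})$ — a true Laurent series with $\ZZ$ coefficients and well-defined leading monomial — and the same escape-to-infinity estimate shows the sum $\sum_w w(1/\Delta_{\real})$ converges in $\Omega_0$. Thus the sum \eqref{eq:3} is regular at $t=0$. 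To see it is a \emph{unit} in $\Omega_2$, I would compute its image in $\Omega_0$ (i.e.\ the $t=0$ specialization) and show the constant coefficient there is $1$: the only $w$ contributing to the $e^0$-coefficient are those whose leading exponent is $0$, i.e.\ $w = e$ (since $\Phi(w^{-1}) = \emptyset$ iff $w = e$), and $e(\Delta_{t,\real}/\Delta_{\real})$ has constant coefficient $1$. A power series over $\ZZ[[t]][t^{-1}]$ with constant coefficient a unit is a unit by the remarks following Proposition~\ref{prop:relativeinverse}; combined with regularity at $t=0$ and constant-coefficient $1$ at $t=0$, this gives a unit regular at $t=0$.

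\textbf{The constant coefficient is $P(t)$.} Finally I would extract the $e^0$-coefficient of \eqref{eq:3} as an element of $\ZZ[[t]]$. Here I would pass to a finite-rank truncation: fix $N$ and consider only the contribution from a parabolic subgroup $W_N$ (or, more robustly, observe that since each $w$ contributes to the $e^0$-coefficient only through the ``constant part'' of its good product, and since for $w \ne e$ this constant part is killed once we also remember the monomial prefactor unless... ) — more cleanly, I would invoke the finite-type Macdonald identity \eqref{eq:15} applied to each finite parabolic and take a limit. Concretely: the constant term of $w(\Delta_{t,\real}/\Delta_{\real})$ in $e^\lambda$ is, by the computation in \eqref{eq:4}, equal to $t^{\ell(w)}$ plus higher-order-in-$e^\lambda$ corrections that don't affect the $e^0$-coefficient of the whole sum — precisely the mechanism by which \eqref{eq:15} and \eqref{eq:16} reduce, at the level of constant terms, to $\sum_w t^{\ell(w)}$. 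So the $e^0$-coefficient of \eqref{eq:3} is $\sum_{w \in W} t^{\ell(w)} = P(t)$, where convergence of this sum in $\ZZ[[t]]$ is exactly the statement that $W$ has finitely many elements of each length. This last identification should be short once the bookkeeping of \eqref{eq:4} is in place; the genuine work is all in the convergence step above.
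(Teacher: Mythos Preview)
Your convergence argument rests on a false premise. You claim that $w(\Delta_{t,\real}/\Delta_{\real})$ carries a monomial prefactor $e^{\rho_w}$ whose height grows with $\ell(w)$, so that only finitely many $w$ contribute at each fixed $e^\lambda$. But when you apply the good product action \eqref{eq:4} to the \emph{ratio}, the $e$-monomial prefactors from numerator and denominator cancel: for $\alpha \in \Phi(w)$ (i.e.\ $w(\alpha)<0$) the numerator contributes $(-te^{w(\alpha)})(1-t^{-1}e^{-w(\alpha)})$ and the denominator contributes $(-e^{w(\alpha)})(1-e^{-w(\alpha)})$, so the quotient is $t\cdot\frac{1-t^{-1}e^{-w(\alpha)}}{1-e^{-w(\alpha)}}$, a genuine power series in $Q^+$ with constant term $t$. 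Multiplying over all $\alpha\in\Phi(w)$ gives a prefactor $t^{\ell(w)}$ and \emph{no} $e$-shift. In particular the $e^0$-coefficient of the $w$-summand is $t^{\ell(w)}\neq 0$, so \emph{infinitely many} $w$ contribute to every $e^\lambda$. Your escape-to-infinity argument therefore does not apply.

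The correct convergence mechanism, which the paper uses, is $t$-adic: once you have the explicit form $t^{\ell(w)}\prod_{\alpha\in\Phi(w^{-1})}\frac{1-t^{-1}e^\alpha}{1-e^\alpha}\prod_{\alpha\notin\Phi(w^{-1})}\frac{1-te^\alpha}{1-e^\alpha}$, you observe that each factor of $t^{-1}$ comes attached to an $e^\alpha$ with $\height(\alpha)\ge 1$, so the coefficient of $e^\lambda$ is divisible by $t^{\ell(w)-\height(\lambda)}$. Since $W$ has finitely many elements of each length, the sum over $w$ of these coefficients is $t$-adically convergent in $\ZZ[[t]]$, landing the total in $\Omega_2$ and regular at $t=0$. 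This same explicit form immediately gives the constant coefficient $t^{\ell(w)}$ of each summand, hence $P(t)$ for the sum---no need for parabolic truncations or the finite-type identity \eqref{eq:15}.
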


\begin{proof}
  
According to the definition of the good product action: 
\begin{align}
w\left( \frac{\Delta_{t, \real}}{\Delta_{\real}}\right) &= \prod_{\alpha \in \Phi(w)} t\left(\frac{1-t^{-1}e^{-w(\alpha)}}{1-e^{-w(\alpha)}}\right) \prod_{\alpha\in \Phi_{\real}-\Phi(w)} \left(\frac{1-t e^{w(\alpha)}}{1-e^{w(\alpha)}}\right) \nonumber \\
&=t^{\ell(w)} \prod_{\alpha \in \Phi(w^{-1})} \left(\frac{1-t^{-1}e^{\alpha}}{1-e^{\alpha}}\right)\prod_{\alpha\in \Phi_{\real}-\Phi(w^{-1})} \left(\frac{1-t e^{\alpha}}{1-e^{\alpha}}\right)
\end{align}
where $\Phi(w)=\Phi^+ \cap w^{-1}(\Phi^-)$ and $\Phi(w^{-1})=\Phi^+ \cap w(\Phi^-)$ are inversion sets, both of size $\ell(w)$. Notice that this expression remains a power series unit in $\P_1$, 
because of the structure of the product $\frac{\Delta_{t, \real}}{\Delta_{\real}}$. Moreover, this expression is regular at $t=0$, and the coefficient of some $\lambda\in Q^+$ is divisible by at least $t^{\ell(w)-\height(\lambda)}$. Since there are finitely many elements of $W$ for any fixed length, it follows that the coefficient of $\lambda$ in the sum over all $w\in W$ is a well-defined power series in $\ZZ[[t]]$. Therefore \eqref{eq:3} is a well-defined power series in $\P_2$, regular at $t=0$. The constant coefficient of each $w$-summand is $t^{\ell(w)}$, hence the total constant coefficient is precisely $P(t)$.
\end{proof}

As $P(t)$ is also a unit in $\P_2$ regular at $t=0$, we define $\m$ by the expression:
\begin{equation} \label{eq:mdefinition}
\m \sum_{w\in W} w\left( \frac{\Delta_{t, \real}}{\Delta_{\real}}\right) = P(t)
\end{equation}
We conclude that $\m$ is a unit in $\P_2$, regular at $t=0$, with constant coefficient $1$.

\subsection{Weyl invariance of $\m$}
Next we show that $\m$ is invariant under $W$, and is therefore supported on $Q^+_{\imag}$, the maximal the maximal Weyl invariant subcone of $Q^+$. In particular this implies that $\m$ is $1$ when $\Phi$ is of finite type, and is a single-variable power series in affine type. Weyl invariance is strongly suggested by the form of \eqref{eq:mdefinition}, but the proof requires care. We record the following useful identities:
\begin{align}
&\frac{\Delta_{\real}}{w(\Delta_{\real})}=(-1)^{\ell(w)} \prod_{\alpha \in \Phi(w^{-1})} e^{\alpha} \\
&\frac{w(\Delta_{t, \real})}{\Delta_{t, \real}}= \prod_{\alpha \in \Phi(w^{-1})}-te^{-\alpha}\left(\frac{1-t^{-1} e^{\alpha}}{1-t e^{\alpha}} \right)
\end{align}
which are both interpreted via the good product action of $W$. In each case the left side is a ratio of infinite products, but all but the finitely many factors corresponding to $\Phi(w^{-1})$ cancel out.

Multiply both sides of \eqref{eq:mdefinition} by $\Delta_{\real}$ to obtain:
\begin{equation} \label{eq:minvariant}
\m \sum_{w\in W} t^{\ell(w)} \prod_{\alpha \in \Phi(w^{-1})} (1-t^{-1}e^{\alpha}) \prod_{\alpha\in \Phi_{\real}-\Phi(w^{-1})} (1-t e^{\alpha})=\Delta_{\real} P(t)
\end{equation}
Consider both sides of this equation as power series in $\P_2$ (not as good products) and act on them by a simple reflection $\sigma_i$. The right side becomes:
\begin{equation}
\sigma_i(\Delta_{\real}) P(t) =-e^{-\alpha_i} \Delta_{\real} P(t)
\end{equation}
One may check that applying $\sigma_i$ to the sum over $W$ on the left side interchanges the summands of $w$ and $\sigma_i w$, and multiplies both by $-e^{-\alpha_i}$. 

Comparing the original form of \eqref{eq:minvariant} to the equality obtained after applying $\sigma_i$, we see immediately that $\sigma_i(\m)=\m$. Thus $\m$ is fixed by $W$, and therefore is a power series with respect to the imaginary subcone $Q^+_{\imag}$. 

\subsection{Viewing $\m^{-1}$ as an inverse with respect to the positive imaginary cone}
 
Our final goal in this section is to characterize $\m^{-1}$ as the inverse of $\frac{\Delta_{\real}}{\Delta_{t, \real}}$ relative to the cone $Q^+_{\imag}$, in the sense of Proposition \ref{prop:relativeinverse}. Recall that though $\m$ and $\m^{-1}$ were originally defined in the ring $\P_2$ since $\frac{\Delta_{\real}}{\Delta_{t, \real}} \in \P_1$ it follows that $\m^{-1}$ and $\m$ in fact belong to $\P_1$ as well.

Since $\m$ is a power series unit on the cone $Q^+_{\imag}$, its inverse $\m^{-1}$ is one as well. Multiplying \eqref{eq:mdefinition} by $\frac{\Delta_{\real}}{\m \Delta_{t, \real}}$ we obtain:
\begin{equation} \label{eq:identity}
\sum_{w\in W} t^{\ell(w)} \prod_{\alpha \in \Phi(w^{-1})} \frac{1-t^{-1}e^{\alpha}}{1-te^{\alpha}} = \frac{P(t) \Delta_{\real}}{\m \Delta_{t, \real}}
\end{equation}
Both sides are power series units in $\P_2$, regular at $t=0$, with constant coefficient $P(t)$. Moreover, observe that the restriction of the left side to the cone $Q^+_{\imag}$ is only $P(t)$. Indeed, each $w$ summand is a power series supported in the cone $Q^+ \cap w(Q^-)$, which intersects the imaginary cone only at the origin. Therefore the support of the left side does not contain any nonzero imaginary $\lambda .$ We conclude that:
\begin{equation} \label{eq:imaginarypart}
\left. \left(\m^{-1}\frac{\Delta_{\real}}{\Delta_{t, \real}}\right) \right|_{Q^+_{\imag}}=1
\end{equation}
By Proposition \ref{prop:relativeinverse}, this equation characterizes $\m^{-1}$ uniquely. 

We summarize the results of this section in the following theorem:

\begin{Theorem}
The series $\m$ and $\m^{-1}$ are power series units in $\ZZ[t,t^{-1}][[Q^{+}]]$, regular at $t=0$, with constant coefficient $1$. Both are fixed by the Weyl group $W$ and supported on the positive imaginary cone $Q^+_{\imag}$, and $\m^{-1}$ is the inverse of $\frac{\Delta_{\real}}{\Delta_{t, \real}}$ relative to $Q^+_{\imag}$.
\end{Theorem}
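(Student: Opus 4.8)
The statement is a summary of the three preceding subsections, so the plan is to stitch together what has already been established rather than prove anything new. First I would recall the Lemma: the sum $\sum_{w\in W} w\bigl(\tfrac{\Delta_{t,\real}}{\Delta_{\real}}\bigr)$, interpreted via the good product action, converges to a unit in $\P_2$ that is regular at $t=0$ with constant coefficient $P(t)$. Since $P(t)\in\ZZ[[t]]$ is itself a unit in $\P_2$ regular at $t=0$ with constant term $1$, the defining equation \eqref{eq:mdefinition} exhibits $\m$ as the quotient of two such units; hence $\m$, and likewise $\m^{-1}$, is a power series unit in $\P_2$, regular at $t=0$, with constant coefficient $1$. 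This gives the first sentence except for the refinement of the coefficient ring, which I postpone.

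Next I would invoke the Weyl-invariance argument: multiplying \eqref{eq:mdefinition} by $\Delta_{\real}$ yields \eqref{eq:minvariant}, and applying a simple reflection $\sigma_i$ to both sides (now viewed as honest power series in $\P_2$, not as good products) multiplies the right side by $-e^{-\alpha_i}$ and, on the left, permutes the $w$- and $\sigma_i w$-summands while multiplying each by the same factor $-e^{-\alpha_i}$; comparing with the original equation forces $\sigma_i(\m)=\m$. As each simple reflection fixes $\m$, the whole group $W$ does, and therefore $\supp(\m)\subseteq\bigcap_{w\in W} w(Q^+) = Q^+_{\imag}$ by the identification of the positive imaginary cone with the maximal $W$-invariant subcone of $Q^+$ recalled in \S\ref{sect:prelim}. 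The same holds for $\m^{-1}$, since the inverse of a $W$-invariant unit supported on $Q^+_{\imag}$ is again such a unit (multiplication of power series commutes with the $W$-action on $\P_0\subseteq\P_2$).

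Then I would reproduce the identification of $\m^{-1}$ as a relative inverse. Multiplying \eqref{eq:mdefinition} through by $\tfrac{\Delta_{\real}}{\m\,\Delta_{t,\real}}$ gives \eqref{eq:identity}; restricting to $Q^+_{\imag}$, each $w$-summand on the left is supported in $Q^+\cap w(Q^-)$, which meets $Q^+_{\imag}$ only at $0$, so the left side restricts to $P(t)$. Dividing by $P(t)$ yields \eqref{eq:imaginarypart}, namely $\bigl(\m^{-1}\tfrac{\Delta_{\real}}{\Delta_{t,\real}}\bigr)\big|_{Q^+_{\imag}}=1$, and Proposition \ref{prop:relativeinverse} (with $Q^{++}=Q^+_{\imag}$) says this property characterizes $\m^{-1}$ as the inverse of $\tfrac{\Delta_{\real}}{\Delta_{t,\real}}$ relative to $Q^+_{\imag}$. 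Finally, to sharpen the coefficient ring from $\ZZ[[t]][t^{-1}]$ to $\ZZ[t,t^{-1}]$, I would observe that $\tfrac{\Delta_{\real}}{\Delta_{t,\real}}$ lies in $\P_1$ (it is a good product with coefficients in $\ZZ[t,t^{-1}]$), and that the inductive construction of the relative inverse in Proposition \ref{prop:relativeinverse} stays within the coefficient ring of the input; hence $\m^{-1}\in\P_1$, and inverting the unit $\m^{-1}$ back inside $\P_1$ gives $\m\in\P_1$ as well.

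The only genuinely delicate point is the bookkeeping in the Weyl-invariance step: one must be scrupulous that \eqref{eq:minvariant} is being acted on as a plain power series in $\P_2$ (where the $W$-action is a priori only partially defined) and verify that this particular combination does lie in the subring on which $W$ acts, so that the manipulation is legitimate; equivalently, one checks that the pairing $w\leftrightarrow\sigma_i w$ together with the identity $\sigma_i(\Delta_{\real})=-e^{-\alpha_i}\Delta_{\real}$ and the inversion-set bookkeeping for $\Phi(w^{-1})$ versus $\Phi((\sigma_i w)^{-1})$ really does produce matching summands. Everything else is a direct citation of the Lemma, Proposition \ref{prop:relativeinverse}, and the structural facts about $Q^+_{\imag}$ from \S\ref{sect:prelim}.
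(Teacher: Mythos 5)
Your proposal is correct and follows essentially the same route as the paper: the theorem is indeed just a consolidation of the Lemma, the Weyl-invariance computation via \eqref{eq:minvariant}, and the relative-inverse characterization \eqref{eq:imaginarypart} together with Proposition \ref{prop:relativeinverse}, with the coefficient-ring refinement to $\ZZ[t,t^{-1}]$ obtained exactly as you describe from $\frac{\Delta_{\real}}{\Delta_{t,\real}}\in\P_1$. No gaps; your flagged concern about the legitimacy of the $\sigma_i$-action on \eqref{eq:minvariant} is the same point the paper acknowledges requires care.
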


\subsection{Remarks}
We conclude this section with two remarks: first, in affine and indefinite Kac-Moody type, the full Weyl denominator $\Delta$ and $t$-twisted Weyl denominator $\Delta_t$ are defined similarly to $\Delta_{\real}$ and $\Delta_{t, \real}$, but with additional factors of $(1-e^{\alpha})^{\text{mult}(\alpha)}$ and $(1-te^{\alpha})^{\text{mult}(\alpha)}$, respectively, for all positive imaginary roots $\alpha$. The positive integers $\text{mult}(\alpha)$, called root multiplicities, have been the subject of extensive study. We have defined $\m$ to include the root multiplicities as part and parcel. Indeed, specializing equation \eqref{eq:identity} at $t=0$ gives the Kac-Weyl denominator formula:
\begin{equation} \label{eq:denominatorformula}
\sum_{w\in W} (-1)^{\ell(w)} \prod_{\alpha \in \Phi(w^{-1})} e^{\alpha} = \Delta_{\real} \m^{-1}|_{t=0}
\end{equation}
from which we see that the specialization of $\m^{-1}$ at $t=0$ is $\prod_{\alpha \in \Phi^+_{\imag}} (1-e^{\alpha})^{\text{mult}(\alpha)}$. Specializing equation \eqref{eq:imaginarypart} at $t=0$ shows that this product is the inverse of $\Delta_{\real}$ relative to the cone $Q^+_{\imag}$. The Peterson algorithm \cite{Peterson83} and the formula of Berman and Moody \cite{Berman-Moody79} are means of computing imaginary roots and their multiplicities based on knowledge of the real roots in $\Phi$. We generalize these results to compute all of $\m$ in the sequel. 

The second remark concerns root systems $\Phi$ of affine type. In this case, $Q^+_{\imag}$ is a one-dimensional cone generated by the minimal imaginary root $\delta$. This means that $\m$ is a single-variable power series. The restriction map $|_{Q^+_{\imag}}$ is linear over imaginary series in this case. Therefore $\m^{-1}$ can be pulled out of equation \eqref{eq:imaginarypart} to give: 
\begin{equation}
  \label{eq:im-linear}
\left.\left(\frac{\Delta_{\real}}{\Delta_{t, \real}}\right)\right|_{Q^+_{\imag}}=\m 
\end{equation}
This argument does {\em{not}} apply in indefinite Kac-Moody type. The restriction map is not generally linear over imaginary series because the sum of an imaginary vector with a real vector can be either real or imaginary. Formula \ref{eq:im-linear} is not true in the general Kac-Moody case.
 

\section{The Affine Case}\label{sect:affinecase}
In this section we give an explicit formula for $\m$ in the affine case. This formula is equivalent to Macdonald's Constant Term conjecture \cite{Macdonald82}, which was proven by Cherednik \cite{Cherednik95}--the equivalence is explained in \cite{Macdonald03}. The formula we give in Theorem \ref{thm:affineformula} can be deduced from formulas which have appeared in the literature before. Our contribution is to phrase it in a uniform way for all affine types, in particular including $A^{(2)}_{2r}$.  Such a concrete formula is probably unavailable in indefinite Kac-Moody type, but \S \ref{sect:ordertwovanishing} below generalizes some properties of this formula to all types.  

Consider an affine Kac-Moody root system $\Phi$. We specify a zeroth vertex in the Dynkin diagram as follows. Except in the case of $A^{(2)}_{2r}$, we choose $0$ to agree with \cite{Kac90}. In the case of $A^{(2)}_{2r}$, our vertex $0$ is Kac's vertex $r$. Note that these are the conventions of \cite{Beck-Nakajima04}, which are chosen to interact well with Drinfeld's loop presentation of the quantum affine algebra. The Dynkin diagram with vertex $0$ removed defines a finite root subsystem $\Phi_{\fin}$. Let $Q_\fin$ be the root lattice corresponding to this subsystem, and $Q^+_{\fin}$ its positive cone.

Recall that every imaginary root of $\fg$ is of the form $k \delta$, where $\delta$ is the minimal imaginary root. For each imaginary root $\eta$ we define $S(\eta) = \{ \beta \in  Q^+_{\fin} \suchthat \beta + \eta \in \Phi_{\real} \}$ and let:
\begin{align}
  \label{eq:6}
\m_\eta = \prod_{\beta \in S(\eta)} \frac{ (1 - t^{\height(\beta)} e^\eta)^2} { (1 - t^{\height(\beta)-1} e^\eta) (1 - t^{\height(\beta)+1} e^\eta) }  
\end{align}

Then we have the following theorem, which is equivalent to Cherednik's resolution of the constant term conjecture.
\begin{Theorem}\label{thm:affineformula}\cite{Cherednik95}, \cite[3.12]{Macdonald03}, \cite[5.8.20]{macbook}
  \begin{align}
    \label{eq:7}
    \m = \prod_{\eta  \in \Phi^+_{\imag}} \m_\eta
  \end{align}
\end{Theorem}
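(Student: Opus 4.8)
The plan is to reduce Theorem~\ref{thm:affineformula} to the characterization of $\m^{-1}$ established in the previous section, namely that $\m^{-1}$ is the unique element of $\ZZ[t,t^{-1}][[Q^+_{\imag}]]$ satisfying $\bigl(\m^{-1}\tfrac{\Delta_{\real}}{\Delta_{t,\real}}\bigr)\big|_{Q^+_{\imag}}=1$, together with the linearity of the restriction map over imaginary series that holds in affine type (equation~\eqref{eq:im-linear}). Concretely, since $Q^+_{\imag} = \ZZ_{\geq 1}\delta$ is one-dimensional, equation~\eqref{eq:im-linear} says $\m = \bigl(\tfrac{\Delta_{\real}}{\Delta_{t,\real}}\bigr)\big|_{Q^+_{\imag}}$, so it suffices to prove
\[
\left.\left(\prod_{\alpha\in\Phi^+_{\real}} \frac{1-te^\alpha}{1-e^\alpha}\right)\right|_{Q^+_{\imag}} = \prod_{\eta\in\Phi^+_{\imag}}\m_\eta.
\]
First I would set up affine notation: every real root is uniquely $\beta + k\delta$ with $\beta\in\Phi_{\fin}\cup\{0\}$ and $k\in\ZZ$ subject to the standard constraints (including the $A^{(2)}_{2r}$ twist, where the zeroth vertex is Kac's vertex $r$, as fixed in the text), and the key combinatorial input is that $\beta + k\delta$ is a real root precisely for the ranges of $k$ controlled by $\height(\beta)$ — this is exactly what makes the sets $S(\eta)$ enter.

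The second step is to organize the product $\prod_{\alpha\in\Phi^+_{\real}}\tfrac{1-te^\alpha}{1-e^\alpha}$ by its $\delta$-grading. Writing $e^{\beta+k\delta}$ and then discarding the finite-root-lattice direction corresponds exactly to the restriction $|_{Q^+_{\imag}}$; but one must be careful, because the restriction of a product is not the product of restrictions. The honest approach is to take logarithms: $\log\tfrac{1-te^\alpha}{1-e^\alpha} = \sum_{j\geq 1}\tfrac{e^{j\alpha}-t^je^{j\alpha}}{j}$ (working in $\P_2$), sum over $\alpha\in\Phi^+_{\real}$, extract the part supported on $\ZZ_{\geq1}\delta$, and re-exponentiate. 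The coefficient of $e^{m\delta}$ in $\log$ then becomes a sum over pairs $(\alpha,j)$ with $j\alpha \equiv m\delta$ modulo the finite part, i.e. $\alpha = \beta + k\delta$ with $j\beta \in \ZZ\delta$-direction killed — so effectively $\beta$ ranges over a set compatible with $\height(\beta)$ and one tracks $t^{\height(\beta)\cdot(\text{something})}$. I expect this to reproduce, after resummation, precisely the logarithm of $\prod_{\eta}\m_\eta$, since each $\m_\eta$ has the shape $\tfrac{(1-t^h e^\eta)^2}{(1-t^{h-1}e^\eta)(1-t^{h+1}e^\eta)}$ whose logarithm is $\sum_{j\geq1}\tfrac{e^{j\eta}}{j}(2t^{jh}-t^{j(h-1)}-t^{j(h+1)}) = -\sum_{j\geq1}\tfrac{e^{j\eta}}{j}t^{j(h-1)}(1-t^j)^2$.

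Alternatively — and this may be cleaner — I would instead verify the defining equation~\eqref{eq:imaginarypart} directly for the candidate, i.e. show $\Bigl(\bigl(\prod_\eta\m_\eta\bigr)^{-1}\tfrac{\Delta_{\real}}{\Delta_{t,\real}}\Bigr)\big|_{Q^+_{\imag}}=1$, and invoke uniqueness from Proposition~\ref{prop:relativeinverse}. This sidesteps the nonlinearity worry because one is checking a single identity rather than computing $\m$ from scratch, and it is the natural route given how the previous section was structured. Either way, the main obstacle is the bookkeeping of which $\beta+k\delta$ are real roots across all affine types in a uniform manner — in particular handling $A^{(2)}_{2r}$, where short/long/"medium" roots and the nonstandard choice of zeroth vertex make the ranges of $k$ depend on $\beta$ in a type-sensitive way, and where getting the exponents $\height(\beta)\pm1$ exactly right in $\m_\eta$ requires matching Kac's explicit root-system data (\cite[Ch.~6,8]{Kac90}) against the Beck--Nakajima conventions. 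The rest — manipulating logarithms of good products, interchanging sums, re-exponentiating — is routine once the root-combinatorics lemma is in place. I would isolate that lemma first and state it cleanly for each affine type (or better, prove it uniformly using the pairing $\langle\delta,\cdot\rangle$ and the structure of $\Phi_{\real}$), then let the generating-function identity fall out.
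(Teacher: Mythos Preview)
The paper does not prove Theorem~\ref{thm:affineformula}; it quotes it from the literature. As stated in \S\ref{sect:intro} and immediately after the theorem, the identity~\eqref{eq:7} is equivalent to the Macdonald constant term conjecture, proven by Cherednik via double affine Hecke algebras. The paper simply cites \cite{Cherednik95}, \cite[3.12]{Macdonald03}, \cite[5.8.20]{macbook} (invoking \cite{sharma-viswanath} for the $A^{(2)}_{2r}$ specialization) and moves on.

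Your proposal has a genuine gap. Via~\eqref{eq:im-linear} you correctly reduce to evaluating $\bigl(\Delta_{\real}/\Delta_{t,\real}\bigr)\big|_{Q^+_{\imag}}$, but then you propose to compute this by taking logarithms, restricting the log, and re-exponentiating. That does not work: the map $|_{Q^+_{\imag}}$ is a \emph{constant-term} operation in the finite root-lattice directions; it is linear but not multiplicative, so it does not commute with $\exp$ and $\log$. Carried out literally, your procedure gives the wrong answer immediately: for a real root $\alpha=\beta+k\delta$ with $\beta\in\Phi_{\fin}$ nonzero, no positive multiple $j\alpha$ lies in $\ZZ\delta$, so the restriction of $\log(\Delta_{\real}/\Delta_{t,\real})$ to $Q^+_{\imag}$ is identically $0$, and re-exponentiating yields $1\neq\m$. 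Your phrase ``discarding the finite-root-lattice direction'' quietly slides from the constant-term map to the specialization $e^{\beta}\mapsto 1$; the latter \emph{is} a ring homomorphism but computes a different object. The alternative route you sketch (verify~\eqref{eq:imaginarypart} for the candidate and invoke Proposition~\ref{prop:relativeinverse}) unwinds, by the same linearity over imaginary series, to the very same constant-term computation.

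The upshot is that evaluating this constant term is not bookkeeping: it is exactly the content of the Macdonald constant term conjecture, and no elementary argument of the sort you outline is known. The affine root-system combinatorics you flag as the ``main obstacle'' is comparatively easy; the depth lies in the constant-term evaluation itself, which is why the paper defers to Cherednik.
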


For every affine Kac-Moody root system other than $A^{(2)}_{2r}$, the identity \eqref{eq:7} follows from \cite[3.12]{Macdonald03}. For the root system $A^{(2)}_{2r}$ we may can derive the constant term formula \cite[5.8.20]{macbook} from that of the root system $(C_r^{\vee},C_r)$ following the specialization choices of Sharma and Viswanath \cite[Section 3]{sharma-viswanath}.

\subsection{Divisibility by $(1-t)^2$}
The results of \S \ref{sect:ordertwovanishing} below are generalizations of some aspects of Theorem \ref{thm:affineformula} to arbitrary Kac-Moody type. Specifically, Theorem \ref{thm:roots_only} will assert that $\m$ can always be expressed as a product over imaginary roots, as in \eqref{eq:7}.

The shape of the factors $\m_{\eta }$ in \eqref{eq:6} motivates Theorem \ref{thm:order2vanishing}, which asserts that a certain polynomial is divisible by $(1-t)^2.$ Write $\m_{\eta }$ in product form, as in \eqref{eq:2}: $\m_{\eta }=\prod_{n=0}^{N_{\eta}}(1-t^n e^{\eta})^{-m(\eta, n)}$. By \eqref{eq:6}, the polynomial
$m_{\eta}(t)=\sum_{n=0}^{N_{\eta}}m(\eta, n)t^{n}$ 
is divisible by $(1-t)^2$. We see that $m_{\eta}(t) = (1-t)^2 \chi_{\eta}(t)$, where: 
\begin{align}
  \label{eq:1}
  \chi_{\eta}(t) = \sum_{\beta \in S(\eta)} t^{\height(\beta)}
\end{align}

In indefinite type, it is no longer true that $\chi_{\eta}(t)$ is a polynomial with positive coefficients, but Conjecture \ref{conj:alternating} points to the possibility of some interesting positivity properties. 
\section{The Generalized Peterson Algorithm}\label{sect:peterson}
We now describe an efficient algorithm to compute $\m$ from \eqref{eq:imaginarypart}. This process relies on manipulations of power series via Propositions \ref{prop:relativeinverse} and \ref{prop:seriestoproduct}. It requires a list of positive real roots in $\Phi$ as input. The power series $\m$ in height up to $H$ is computed with runtime polynomial in $H$. The algorithm  generalizes the Peterson algorithm \cite{Peterson83} to compute root multiplicities, and in fact recovers the root multiplicities when specialized at $t=0$. It could be used for more general power series computations, but for concreteness we will describe its use to compute $\m$ only.

By Proposition \ref{prop:seriestoproduct}, $\m$ can be uniquely expressed as a product of the form:
\begin{equation}\label{eq:mprodform_Peterson}
\m=\prod_{\lambda \in Q^+_{\imag}} \prod_{n=0}^{N_{\lambda}}(1-t^n e^{\lambda})^{-m(\lambda, n)}
\end{equation}
with $N_\lambda \in \NN$ and $m(\lambda, n)\in \ZZ$. We have seen from \eqref{eq:denominatorformula} that $m(\lambda, 0)=\mult(\lambda )$ for $\lambda\in Q^+_{\imag}$. We extend the functions $m(\lambda, n)$ to all $\lambda\in Q^+$ by setting $N_{\alpha}=1,$ $m(\alpha, 0)=1$, $m(\alpha, 1)=-1$ for real roots $\alpha$, and $m(\lambda, n)=0$ for all $n$ if $\lambda$ is not an imaginary vector or a real root.
Then we can write:
\begin{equation}
\m^{-1}\frac{\Delta_{\real}}{\Delta_{t, \real}}=\prod_{\lambda \in Q^+} \prod_{n=0}^{N_{\lambda}}(1-t^n e^{\lambda})^{m(\lambda, n)}
\end{equation}
The restriction of this product to $Q^+_{\imag}$ is $1$ by \eqref{eq:imaginarypart}. We use this fact to compute its imaginary factors one at a time, inductively by the height of $\lambda$. 

For $\lambda \in Q^+$, let:
\begin{equation}\label{eq:Mlambdashape}
\m_{\lambda}=\prod_{n=0}^{N_{\lambda}} (1-t^n e^{\lambda})^{-m(\lambda, n)}
\end{equation}
To initialize the algorithm, we set $\m_0=1$. We must determine $\m_{\lambda}$ for $\lambda \in Q^+_{\imag}$, assuming that all $\m_{\mu}$ for $\text{ht}(\mu)<\text{ht}(\lambda)$ have already been computed. For the vectors $\mu \in Q^+_{\imag}$, $\m_{\mu}$ is computed inductively. For  $\mu \not \in Q^+_{\imag}$, $\m_{\mu}$ is known because a list of positive real roots of height less than $\text{ht}(\lambda)$ is known. 
By \eqref{eq:imaginarypart}, the expansion of the finite product:
\begin{equation} \label{eq:partialvanishingquotient}
\m_{\lambda}^{-1} \prod_{\substack{\mu \in Q^+ \\ \text{ht}(\mu)<\text{ht}(\lambda)}} \m_{\mu}^{-1}
\end{equation}
has coefficient $0$ at $\lambda$. The second factor here is assumed known by induction.

Because \eqref{eq:partialvanishingquotient} has $0$ as the coefficient of $e^\lambda$, the coefficient of $e^\lambda$ in the product $\displaystyle \prod_{\substack{\mu \in Q^+ \\ \text{ht}(\mu)<\text{ht}(\lambda)}} \m_{\mu}^{-1}$ is a polynomial in $t$ equal to $m(\lambda, 0) + m(\lambda, 1)t+\cdots+m(\lambda, N_{\lambda})t^{N_{\lambda}}$. Thus we have computed $\m_{\lambda}$ inductively. 

Note that each $\m_\lambda$ is a power series unit in $\Omega_1$, with constant coefficient $1$, and is regular at $t=0$. Any finite product of $\m_\lambda$ factors retains these properties. The infinite product $\m=\prod_{\lambda \in Q^+_{\imag}} \m_{\lambda}$ also has a well-defined power series expansion because only finitely many factors contribute to each coefficient. It is a power series unit in $\Omega_1$, with constant coefficient $1$, and is regular at $t=0$. It satisfies equation \eqref{eq:imaginarypart}, which defines $\m$ uniquely by Proposition \ref{prop:relativeinverse}.

The algorithm as described above produces a $W$-invariant series $\m.$ Since we know a priori that $\m$ is $W$-invariant we can exploit this fact to make the algorithm more efficient. If the imaginary vectors $\lambda$ and $\lambda'$ lie in the same $W$-orbit, then $m(\lambda', n)=m(\lambda, n)$ for all $n$. We need only to compute $m(\lambda, n)$ by the above algorithm for $\lambda$ of minimal height, i.e. in the antidominant subcone of $Q^+_{\imag}$. The remaining factors of $\m$ can be deduced from the fact that every $\lambda' \in Q^+_{\imag}$ is $W$-equivalent to an antidominant $\lambda.$

\section{The Generalized Berman-Moody Formula}\label{sect:GenBermanMoody}
We adapt the method of Berman and Moody \cite{Berman-Moody79} to give formulas for $m(\lambda, n)$. Our starting point is equation \eqref{eq:identity}. Set:
\begin{equation} \label{eq:sigma}
\Sigma = P(t)^{-1} \sum_{w\in W} t^{\ell(w)} \prod_{\alpha \in \Phi(w^{-1})} \frac{1-t^{-1}e^{\alpha}}{1-te^{\alpha}} = \frac{\Delta_{\real}}{\m \Delta_{t, \real}}
\end{equation}
The series coefficients on the left side are ratios of power series in $t$, but since the right-hand side lies in $\Omega_1$, they simplify to polynomials in $t$. Let $P_{\lambda}(t)$ denote the coefficient of $\lambda$ in $\Sigma.$ 

We may specify these coefficients using the expansion:
\begin{equation}
\frac{1-t^{-1}e^{\alpha}}{1-te^{\alpha}} = \left( 1 + (1-t^{-2}) \sum_{k=1}^{\infty} t^k e^{k \alpha} \right)
\end{equation}
For $\lambda \in Q^+$, let $K(\lambda)$ denote the set of real Kostant partitions of $\lambda$. That is, an element $\underline{k}\in K(\lambda)$ is a collection of nonnegative integers $k_{\alpha}$ indexed by $\alpha \in \Phi^+_{\real}$, all but finitely many of which are 0, with $\sum k_{\alpha} \alpha=\lambda$. Let $|\underline{k}|=\sum k_{\alpha}$, $\text{supp}(\underline{k})=\lbrace \alpha \in \Phi^+_{\real} | k_{\alpha} \neq 0 \rbrace$, and let $|\text{supp}(\underline{k})|$ denote the cardinality of $\text{supp}(\underline{k})$. A $w$-summand in $\Sigma$ contributes to the coefficient of $\lambda$ only if there exists a real Kostant partition $\underline{k}$ of $\lambda$ such that $\text{supp}(\underline{k}) \subset \Phi(w^{-1})$. In particular, $w^{-1}(\lambda)\in Q^-$. The contribution corresponding to a certain $w\in W$ and $\underline{k}\in K(\lambda)$ with $\text{supp}(\underline{k}) \subset \Phi(w^{-1})$ is $P(t)^{-1} t^{\ell(w)} (1-t^{-2})^{|\text{supp}(\underline{k})|} t^{|\underline{k}|}$. Therefore the full coefficient of $\lambda$ can be expressed as:
\begin{equation} \label{eq:plambda}
P_{\lambda}(t)= P(t)^{-1} \sum_{\underline{k} \in K(\lambda)} t^{|\underline{k}|} (1-t^{-2})^{|\text{supp}(\underline{k})|} P_{\text{supp}(\underline{k})}(t)
\end{equation}
where $P_{\text{supp}(\underline{k})}(t)$, an analogue of $P(t)$, is defined as:
\begin{equation}
P_{\text{supp}(\underline{k})}(t)=\sum_{\substack{w \in W \\ \text{supp}(\underline{k}) \subset \Phi(w^{-1})}} t^{\ell(w)}
\end{equation}

For example, when $\lambda\in Q^+_{\imag}$, \eqref{eq:plambda} implies that $P_{\lambda}(t)=0$, since there is no $w\in W$ with $w^{-1}(\lambda)<0$. When $\alpha_i$ is a simple root, it follows from \eqref{eq:plambda} that $P_{n\alpha_i}(t)=t^n(1-t^{-1})$. We do not see a direct way to argue that $P_{\lambda}(t)$ is a polynomial from equation \eqref{eq:plambda}, without using the generating function $\Sigma$. 


We introduce additional notation. For $\lambda\in Q^+$, let $\text{Par}(\lambda)$ be the set of vector partitions of $\lambda$. An element of $\text{Par}(\lambda)$ is an unordered tuple $\underline{\lambda}=(\lambda_i)$ of nonzero vectors in $Q^+$ such that $\lambda=\sum \lambda_i$. Let $|\underline{\lambda}|$ denote the number of components in the partition. We caution the reader that vector partitions and Kostant partitions both appear in the sequel, though we have endeavored to keep the two concepts separate. Roughly, our formula is indexed by vector partitions of $\lambda$, and then Kostant partitions of each component $\lambda_i$. 

We also introduce the set $\text{OPar}(\lambda)$ of ordered partitions of $\lambda$. If $\underline{\lambda} \in \text{Par}(\lambda)$ consists of $r_1$ copies of $\lambda_1$, $r_2$ copies of $\lambda_2$, etc., and $r$ components total, then the number of distinct orderings is the multinomial coefficient $\frac{r!}{r_1!\cdots r_k!}$, which we will abbreviate, following Berman and Moody, as $B(\underline{\lambda})$.

For $\kappa, \lambda \in Q^+$, if $\lambda=k \kappa$, we will write $\kappa| \lambda$ and $\frac{\lambda}{\kappa}=k$. Let $\mu$ denote the M\"{o}bius function. Recall that $m_{\lambda}(t)=\sum_{n=0}^{N_{\lambda}} m(\lambda, n) t^{n}.$ The main theorem of this section is the following:
\begin{Theorem} \label{thm:BermanMoody}
For all $\lambda \in Q^+$, we have the following:
\begin{equation} \label{eq:multformula}
m_{\lambda}(t) = \sum_{\kappa | \lambda} \mu\left(\lambda/\kappa\right)\left(\frac{\lambda}{\kappa}\right)^{-1} \sum_{\underline{\kappa}\in \mathrm{Par}(\kappa)} (-1)^{|\underline{\kappa}|}\frac{B(\underline{\kappa})}{|\underline{\kappa}|} \prod_{i=1}^{|\underline{\kappa}|} P_{\kappa_i}(t^{\lambda/\kappa})
\end{equation}
\end{Theorem}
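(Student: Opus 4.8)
The plan is to invert the product formula $\m = \prod_{\lambda \in Q^+} \m_\lambda$ (with the convention extending $\m_\lambda$ to real roots) using equation \eqref{eq:sigma}, which already gives us $\Sigma = \frac{\Delta_{\real}}{\m \Delta_{t,\real}} = \m^{-1} \frac{\Delta_{\real}}{\Delta_{t,\real}}$ as a power series with known coefficients $P_\lambda(t)$. The starting point is the identity
\begin{equation*}
\Sigma = \prod_{\lambda \in Q^+} \m_\lambda^{-1} = \prod_{\lambda \in Q^+} \prod_{n=0}^{N_\lambda} (1 - t^n e^\lambda)^{m(\lambda,n)},
\end{equation*}
valid as an equality of Laurent series units. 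Taking a (formal) logarithm converts the product into a sum; using $\log(1 - x) = -\sum_{j \geq 1} x^j/j$ we get
\begin{equation*}
\log \Sigma = -\sum_{\lambda \in Q^+} \sum_{n} m(\lambda,n) \sum_{j \geq 1} \frac{t^{nj} e^{j\lambda}}{j} = -\sum_{\kappa \in Q^+} \frac{1}{?} \left( \sum_{j | \kappa} \cdots \right) e^{\kappa},
\end{equation*}
where for fixed $\kappa$ one collects contributions from all $(\lambda, j)$ with $j\lambda = \kappa$, i.e. $\lambda | \kappa$ and $j = \kappa/\lambda$. Writing $\kappa = j\lambda$, the coefficient of $e^\kappa$ in $\log \Sigma$ is $-\sum_{\lambda | \kappa} \frac{\lambda}{\kappa} \, m_\lambda(t^{\kappa/\lambda})$ (after carefully tracking that the $t$-variable in $m_\lambda$ gets raised to the $j$-th power and there is a $1/j$ factor). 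Möbius inversion over the divisibility poset on $Q^+$ then recovers $m_\lambda(t)$ from the coefficients of $\log \Sigma$; this produces the outer sum $\sum_{\kappa | \lambda} \mu(\lambda/\kappa)(\lambda/\kappa)^{-1}$ in \eqref{eq:multformula}.

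The remaining task is to compute the coefficient of $e^\kappa$ in $\log \Sigma$ directly in terms of the $P_{\kappa_i}(t)$. For this I would use the other expression for $\log \Sigma$: since $\Sigma$ has constant term $1$, write $\Sigma = 1 + (\Sigma - 1)$ and expand $\log \Sigma = \sum_{r \geq 1} \frac{(-1)^{r-1}}{r}(\Sigma - 1)^r$. The coefficient of $e^\kappa$ in $(\Sigma-1)^r$ is a sum over ordered tuples $(\kappa_1, \ldots, \kappa_r)$ of nonzero vectors summing to $\kappa$ of $\prod_i P_{\kappa_i}(t)$; grouping ordered tuples by their underlying unordered partition $\underline{\kappa} \in \mathrm{Par}(\kappa)$ introduces the multinomial factor $B(\underline{\kappa})$, and summing $\frac{(-1)^{r-1}}{r}$ over $r = |\underline{\kappa}|$ yields exactly the factor $(-1)^{|\underline{\kappa}|}\frac{B(\underline{\kappa})}{|\underline{\kappa}|}$ appearing in \eqref{eq:multformula} (up to an overall sign bookkeeping that matches the $-1$ coming from $\log(1-x)$ versus $\log \Sigma$). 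Combining this with the Möbius inversion step, replacing $t$ by $t^{\lambda/\kappa}$ in the inner sum as dictated by the substitution in the first paragraph, gives the formula.

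The main obstacle, and where I would spend the most care, is making the formal logarithm manipulations rigorous and keeping the two substitutions straight: in the first step $\log \Sigma$ is expressed via the exponents $m(\lambda,n)$ with $t \mapsto t^j$, while in the second step it is expressed via the coefficients $P_{\kappa_i}(t)$ with the \emph{original} $t$; reconciling these at the level of a fixed $\kappa$ and then performing Möbius inversion is exactly what forces the $t^{\lambda/\kappa}$ substitution inside $P_{\kappa_i}$ in \eqref{eq:multformula}. One must check that $\log$ is well-defined on the relevant completed ring — this is fine because $\Sigma$ is a power series unit with constant term $1$ on the graded cone $Q^+$, so $\Sigma - 1$ is topologically nilpotent with respect to the height filtration, and for each fixed $\kappa$ only finitely many terms of the series for $\log$ contribute. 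A secondary point to verify is that the divisibility relation $\kappa | \lambda$ on $Q^+$ is a well-behaved locally finite poset so that Möbius inversion applies; since $\{\kappa : \kappa | \lambda\}$ is finite for each $\lambda$ (bounded by height), this is immediate. Finally, one should confirm the claimed identity for real roots and simple-root multiples (e.g. $P_{n\alpha_i}(t) = t^n(1-t^{-1})$) is consistent with $m(\alpha,0)=1$, $m(\alpha,1)=-1$, which serves as a useful sanity check on the sign conventions.
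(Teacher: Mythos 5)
Your proposal is correct and follows essentially the same route as the paper: equate the product form of $\Sigma$ with its additive expansion in the coefficients $P_\lambda$, extract the coefficient of $e^\kappa$, and M\"obius-invert over the divisibility relation with the twisted substitution $t\mapsto t^{\lambda/\kappa}$. The only real difference is that the paper applies the operator $D=\sum_i e^{\alpha_i}\frac{\partial}{\partial e^{\alpha_i}}$ to $\log\Sigma$ (landing first on the ordered-partition formula \eqref{eq:orderedmultformula} weighted by $\mathrm{ht}(\kappa_1)$ and then regrouping), whereas you expand $\log\Sigma$ directly, which introduces the $1/j$ and $1/r$ denominators earlier but reaches the unordered formula \eqref{eq:multformula} in one step; the two bookkeepings agree.
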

This generalizes \cite[Theorem 2]{Berman-Moody79}, and recovers this theorem at $t=0$. Before commencing the proof, we state a version of \eqref{eq:multformula} where the sum is over ordered partitions. We have: 
\begin{equation} \label{eq:orderedmultformula}
m_{\lambda}(t) = \frac{1}{\text{ht}(\lambda)} \sum_{\kappa | \lambda} \mu(\lambda/\kappa) \sum_{\underline{\kappa}\in \text{OPar}(\kappa)} (-1)^{|\underline{\kappa}|} \text{ht}(\kappa_1) \prod_{i=1}^{|\underline{\kappa}|} P_{\kappa_i}(t^{\lambda/\kappa})
\end{equation}
The fact that formulas \eqref{eq:multformula} and \eqref{eq:orderedmultformula} are equivalent follows by counting possible orderings. If the unordered partition $\underline{\kappa}$ consists of $r_1$ copies of $\kappa_1$, $r_2$ copies of $\kappa_2$, etc., and $r$ components total, then the number of orderings with $\kappa_i$ first is $\frac{(r-1)!}{r_1!\cdots(r_i-1)!\cdots r_k!}=\frac{r_i}{r}B(\underline{\kappa})$. The sum of $\text{ht}(\kappa_1)$ over all orderings is $\frac{\sum r_i \text{ht}(\kappa_i)}{r}B(\underline{\kappa})= \frac{\text{ht}(\kappa)B(\underline{\kappa})}{|\underline{\kappa}|}$, which yields the equivalence. Therefore it suffices to prove the ordered version \eqref{eq:orderedmultformula}.

\begin{proof}
We have:
\begin{equation}
\Sigma= \prod_{\lambda\in Q^+} \prod_{n=0}^{N_{\lambda}} (1-t^n e^{\lambda})^{m(\lambda, n)}
\end{equation}
By taking the logarithm of both sides, and then applying the differential operator $D=\sum_i e^{\alpha_i} \frac{\partial}{\partial e^{\alpha_i}}$ we obtain:
\begin{equation}
\frac{D(\Sigma)}{\Sigma} = \sum_{\lambda\in Q^+} \sum_{n=0}^{N_{\lambda}} -m(\lambda, n) \text{ht}(\lambda) \sum_{k=1}^{\infty} t^{kn} e^{k \lambda}
\end{equation}

We now compute $\frac{D(\Sigma)}{\Sigma}$ in terms of the coefficients $P_{\lambda}(t)$. It is clear that: 
\begin{equation}
D(\Sigma)=\sum_{\lambda \in Q^+} -\text{ht}(\lambda) P_{\lambda}(t) e^{\lambda}
\end{equation}
Also, $\Sigma^{-1}$ can be expanded as a geometric series:
\begin{align}
\frac{1}{\Sigma}&=\frac{1}{1+\sum_{\lambda \in Q^+ \setminus \lbrace 0\rbrace} P_{\lambda}(t)e^{\lambda}} \\
&=\sum_{k=0}^{\infty} (-1)^k \left(\sum_{\lambda \in Q^+ \setminus \lbrace 0\rbrace} P_{\lambda}(t) e^{\lambda} \right)^k \\
&=\sum_{\lambda \in Q^+} \sum_{\underline{\lambda} \in \text{OPar}(\lambda)} (-1)^{|\underline{\lambda}|}\left( \prod_{i=1}^{|\underline{\lambda}|} P_{\lambda_i}(t) \right) e^{\lambda}
\end{align}
The term $\left(\sum_{\lambda \in Q^+ \setminus \lbrace 0\rbrace} P_{\lambda}(t) e^{\lambda} \right)^k$ expands as a sum of ordered partitions into $k$ components. We can expand the quotient  $\frac{D(\Sigma)}{\Sigma}$ in terms of ordered partitions as well, where each ordered partition is weighted by the height of its first entry:
\begin{equation}
\frac{D(\Sigma)}{\Sigma} = \sum_{\lambda \in Q^+} \sum_{\underline{\lambda} \in \text{OPar}(\lambda)} (-1)^{|\underline{\lambda}|} \text{ht}(\lambda_1) \left( \prod_{i=1}^{|\underline{\lambda}|} P_{\lambda_i}(t) \right) e^{\lambda}
\end{equation}
Note that the $\lambda=0$ term here is $0$. 

Comparing the coefficient of $e^{\lambda}$ in our two formulas for $\frac{D(\Sigma)}{\Sigma}$ we have:
\begin{equation}
\sum_{\kappa | \lambda} \sum_{n=0}^{N_{\kappa}} m(\kappa, n) \text{ht}(\kappa) t^{n \lambda/\kappa} = \sum_{\underline{\lambda} \in \text{OPar}(\lambda)} (-1)^{|\underline{\lambda}|} \text{ht}(\lambda_1) \left( \prod_{i=1}^{|\underline{\lambda}|} P_{\lambda_i}(t) \right)
\end{equation}
We would like to isolate the $\kappa=\lambda$ terms on the left side. A change of variables and M\"{o}bius inversion yields the expression:
\begin{equation}
\sum_{n=0}^{N_{\lambda}} m(\lambda, n) t^{n} = \frac{1}{\text{ht}(\lambda)} \sum_{\kappa | \lambda} \mu \left( \frac{\lambda}{\kappa}\right)  \sum_{\underline{\kappa} \in \text{OPar}(\kappa)} (-1)^{|\underline{\kappa}|} \text{ht}(\kappa_1) \left( \prod_{i=1}^{|\underline{\kappa}|} P_{\kappa_i}(t^{\lambda/\kappa}) \right)
\end{equation}
This is \eqref{eq:orderedmultformula}, and \eqref{eq:multformula} follows.
\end{proof}

By definition of $m(t, \lambda)$, equation \eqref{eq:multformula} must yield $1-t$ when $\lambda \in \Phi^+_{\real}$. Furthermore, when $\lambda \in Q^+ \setminus Q^+_{\imag}$ and $\lambda$ is not a root, equation \eqref{eq:multformula} must yield $0$. We do not see a direct argument for this result from the formula. The situation with $\lambda \in Q^+_{\imag}$ is of greatest interest. All $\kappa$ dividing $\lambda$ will belong to $Q^+_{\imag}$ as well. Notice, however, that the vector partitions of $\kappa$ which contribute nontrivially to the formula are only those with $\kappa_i \notin Q^+_{\imag}$ for all $i$, so that $P_{\kappa_i}(t)$ does not vanish. The sum is over partitions of imaginary vectors into real vectors. This will be of use in the proof of Theorem \ref{thm:order2vanishing}.


\subsection{Some explicit computations in rank $2$}\label{subsect:explicit_rank2_comp}

In this section we will compute some examples of the polynomials $P_{\text{supp}(\underline{k})}(t)$ in rank 2 root systems. These can be used to compute $P_{\lambda}(t)$ by \eqref{eq:plambda} and then $m_{\lambda}(t)$ by \eqref{eq:multformula}. In type $A_2$, for $i=1, 2$, we have:
\begin{align*}
& P(t)=1+2t+2t^2+t^3 \\
& P_{\{\alpha_i \}}(t)= t + t^2 + t^3 \\
& P_{\{\alpha_1+\alpha_2 \}}(t)= 2 t^2 + t^3 \\
& P_{\{\alpha_i, \alpha_1+\alpha_2 \}}(t)= t^2 + t^3 \\
& P_{\{\alpha_1, \alpha_2 \}}(t) = P_{\{\alpha_1, \alpha_2, \alpha_1+\alpha_2 \}}(t)= t^3
\end{align*}
For example, when $\lambda=\alpha_1+\alpha_2$, there are two Kostant partitions of $\lambda$. By \eqref{eq:plambda}, we can compute:
\begin{equation}
P_\lambda(t)= \frac{t^2(1-t^{-2})^2 t^3}{1+2t+2t^2+t^3} + \frac{t(1-t^{-2})(2t^2+t^3)}{1+2t+2t^2+t^3} = t^2-t
\end{equation}
Note that the sum is a polynomial in $t$, but the individual summands are not.

In all infinite rank 2 Kac-Moody root systems, the Weyl group can be presented as $W = \langle \s_1,\s_2 \suchthat \s_1^2 = \s_2^2 = 1 \rangle$, yielding:
\begin{align} \label{eq:8}
P(t) = \frac{1+t}{1-t} 
\end{align} 
The positive real roots can be partitioned into two sets:
\begin{equation}
\Phi^+_{\real} = \{ \alpha_1, \s_1\alpha_2, \s_1\s_2\alpha_1, \s_1\s_2\s_1\alpha_2, \ldots \} \sqcup \{ \alpha_2, \s_2\alpha_1, \s_2\s_1\alpha_2, \s_2\s_1\s_2\alpha_1, \ldots \}
\end{equation}
and each set $\Phi(w)$ for $w \in W$ is contained in one of the two components. Thus if some $\text{supp}(\underline{k})$ has non-empty intersection with both components, $P_{\text{supp}(\underline{k})} = 0$. If $\text{supp}(\underline{k})$ is contained in one component, suppose that the highest root in  $\text{supp}(\underline{k})$ is a product of $N-1$ simple reflections applied to a simple root. Then we have: 
\begin{align} \label{eq:13}
P_{\text{supp}(\underline{k})}(t) = \frac{t^{N}}{1-t}
\end{align}
In this case the individual summands of \eqref{eq:plambda} will be polynomials.

\section{Properties of $\m$}\label{sect:ordertwovanishing}
In this section we prove certain fundamental properties of $\m$, generalizing statements of \S \ref{sect:affinecase} to arbitrary Kac-Moody type. We begin with two propositions describing the behavior of $\m$ with respect to particular root subsystems.

\begin{Proposition} \label{prop:sublattice_restriction}
Let $\Phi_1$ be a root subsystem of $\Phi$ corresponding to a subset of vertices in the Dynkin diagram. Let $Q_1\subset Q$ be the corresponding root sublattice. Let $\m_1$ and $\m$ be defined as in \eqref{eq:mdefinition} for the root systems $\Phi_1$ and $\Phi$ respectively. Then:
\begin{equation}
\m|_{Q_1}=\m_1
\end{equation}
\end{Proposition}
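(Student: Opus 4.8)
The plan is to reduce the statement to the characterization of $\m^{-1}$ as a relative inverse given in \eqref{eq:imaginarypart}, and then to check that restriction to $Q_1$ is compatible with all the operations involved. Concretely, recall that $\m^{-1}$ is the unique element of $\ZZ[t,t^{-1}][[Q^+_{\imag}]]$ satisfying $\left(\m^{-1}\frac{\Delta_{\real}}{\Delta_{t,\real}}\right)\big|_{Q^+_{\imag}}=1$, and likewise $\m_1^{-1}$ is the unique element of $\ZZ[t,t^{-1}][[Q_{1,\imag}^+]]$ with $\left(\m_1^{-1}\frac{\Delta_{1,\real}}{\Delta_{1,t,\real}}\right)\big|_{Q^+_{1,\imag}}=1$, where $\Delta_{1,\real}=\prod_{\alpha\in\Phi_1^+\cap\Phi_{\real}^+}(1-e^\alpha)$, etc. So I would first establish that $\m|_{Q_1}$ is a power series unit supported on $Q^+_{1,\imag}$ with constant coefficient $1$, and then verify that it satisfies the defining identity for $\m_1^{-1}$ after inversion; uniqueness via Proposition \ref{prop:relativeinverse} (applied with $Q^+\rightsquigarrow Q_1^+$, $Q^{++}\rightsquigarrow Q^+_{1,\imag}$) then finishes the argument.

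The key structural facts I would assemble are the following. First, $Q_1^+ = Q^+\cap Q_1$: a positive integer combination of simple roots lies in $Q_1$ iff only the simple roots indexed by the chosen subset of vertices appear, because the simple roots are linearly independent. Consequently the restriction map $f\mapsto f|_{Q_1}$ on $\Omega_1 = \ZZ[t,t^{-1}][[Q^+]]$ is a ring homomorphism onto $\ZZ[t,t^{-1}][[Q_1^+]]$: if $\lambda = \mu+\nu$ with $\mu,\nu\in Q^+$ and $\lambda\in Q_1$, then $\mu,\nu\in Q_1$ as well (positivity forbids cancellation of the excluded simple coordinates), so the convolution product is respected. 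Second, and this is the crux, I need $\Phi_1^+\cap\Phi_{\real}^+ = \Phi^+_{\real}\cap Q_1$, i.e.\ a real root of $\Phi$ lying in the sublattice $Q_1$ is automatically a real root of the parabolic sub-root-system $\Phi_1$; this is standard Kac-Moody theory (real roots lying in a face of the Tits cone / standard parabolic arguments, cf.\ \cite{Kac90}), but it is the place where a genuine argument is required rather than bookkeeping. Granting it, $\Delta_{\real}|_{Q_1} = \Delta_{1,\real}$ and $\Delta_{t,\real}|_{Q_1}=\Delta_{1,t,\real}$, because every real root of $\Phi$ \emph{not} in $Q_1$ contributes a factor $(1-\ast e^\alpha)$ with $\alpha\notin Q_1$, which restricts to $1$, while the infinitely many real roots outside $Q_1$ do not spoil convergence since only finitely many factors contribute to each coefficient in $Q_1^+$.

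With these in hand the argument runs: apply the restriction homomorphism to the identity $\left(\m^{-1}\frac{\Delta_{\real}}{\Delta_{t,\real}}\right)\big|_{Q^+_{\imag}}=1$. I must be slightly careful here because restriction to $Q^+_{\imag}$ and restriction to $Q_1$ are both ``truncation'' operations and they do not a priori commute with the ring structure; the clean route is to note $\m^{-1}$ is already supported on $Q^+_{\imag}$, and that $Q^+_{\imag}\cap Q_1 = Q^+_{1,\imag}$ — which follows since $Q^+_{\imag} = \bigcap_{w\in W}w(Q^+)$ with the analogous statement for $\Phi_1$ and the fact that $W_1\subseteq W$ — so $\m|_{Q_1} = (\m^{-1})^{-1}|_{Q_1}$ is supported on $Q^+_{1,\imag}$ with constant term $1$, hence a unit in $\ZZ[t,t^{-1}][[Q^+_{1,\imag}]]$ with inverse $\m^{-1}|_{Q_1}$. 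Then multiply $\m^{-1}|_{Q_1}\cdot\frac{\Delta_{1,\real}}{\Delta_{1,t,\real}} = \left(\m^{-1}\frac{\Delta_{\real}}{\Delta_{t,\real}}\right)\big|_{Q_1}$ and restrict further to $Q^+_{1,\imag}$: since $\m^{-1}\frac{\Delta_{\real}}{\Delta_{t,\real}}$ restricted to $Q^+_{\imag}$ is $1$, and $Q^+_{1,\imag}\subseteq Q^+_{\imag}$, the double restriction is $1$. By the uniqueness in Proposition \ref{prop:relativeinverse} this identifies $\m^{-1}|_{Q_1}$ with $\m_1^{-1}$, hence $\m|_{Q_1}=\m_1$. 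The main obstacle, as flagged, is the root-theoretic input $\Phi^+_{\real}\cap Q_1 = \Phi_1^+\cap\Phi_{\real}^+$ together with the matching compatibility $Q^+_{\imag}\cap Q_1 = Q^+_{1,\imag}$; everything else is checking that truncation maps are ring homomorphisms in the appropriate completed sense and tracking supports.
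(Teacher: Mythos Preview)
Your overall strategy is exactly the paper's: use the characterization of $\m^{-1}$ as the relative inverse of $\frac{\Delta_{\real}}{\Delta_{t,\real}}$ with respect to $Q^+_{\imag}$, observe that restriction to $Q_1$ is a ring homomorphism on $\ZZ[t,t^{-1}][[Q^+]]$, match the real-Weyl-denominator ratios via $\Phi\cap Q_1=\Phi_1$, and reduce everything to the set-theoretic identity $Q^+_{\imag}\cap Q_1=Q^+_{1,\imag}$.

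The gap is in your justification of that last identity. You write that it ``follows since $Q^+_{\imag}=\bigcap_{w\in W}w(Q^+)$ with the analogous statement for $\Phi_1$ and the fact that $W_1\subseteq W$.'' But $W_1\subseteq W$ only yields the inclusion $Q^+_{\imag}\cap Q_1\subseteq Q^+_{1,\imag}$: for $\lambda\in Q_1$, the condition $w(\lambda)\in Q^+$ for all $w\in W$ is \emph{stronger} than the same condition for all $w\in W_1$, so being imaginary for $\Phi$ implies being imaginary for $\Phi_1$, not conversely. Yet your argument explicitly invokes the reverse inclusion $Q^+_{1,\imag}\subseteq Q^+_{\imag}$ to conclude that the ``double restriction is $1$,'' and without it you cannot verify that $\m^{-1}|_{Q_1}$ satisfies the defining identity for $\m_1^{-1}$.

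The paper supplies this missing direction via \cite[Theorem 5.4]{Kac90}: any $\lambda\in Q^+_{1,\imag}$ is $W_1$-conjugate to an antidominant vector $\lambda'\in Q_1^+$. Since $\lambda'$ is a nonnegative combination of simple roots $\alpha_i$ with $i$ in the chosen subset, and $\langle\alpha_i,\alpha_j^\vee\rangle\leq 0$ for every simple root $\alpha_j$ outside that subset, $\lambda'$ is antidominant for \emph{all} of $\Phi$ as well; hence $\lambda'\in Q^+_{\imag}$, and then $\lambda\in Q^+_{\imag}$ by $W$-invariance. This is the step you should replace your $W_1\subseteq W$ remark with. (By contrast, the fact $\Phi^+_{\real}\cap Q_1=(\Phi_1)^+_{\real}$, which you flag as the crux, the paper simply quotes as standard; it is the imaginary-cone compatibility that carries the real content.)
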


\begin{proof}
We will use the characterization of $\m$ and $\m_1$ from \eqref{eq:imaginarypart}. Since $\m^{-1}$ is the inverse of $\frac{\Delta_{\real}}{\Delta_{t, \real}}$ relative to the cone $Q^+_{\imag}$ and since restriction to the sublattice $Q_1$ is a ring homomorphism, we have that $(\m|_{Q_1})^{-1}$ is the inverse of $\frac{\Delta_{\real}}{\Delta_{t, \real}}|_{Q_1}$ with respect to the cone $Q^+_{\imag}\cap Q_1$. Since the roots of $\Phi_1$ are precisely those roots of $\Phi$ which lie in $Q_1$, $\frac{\Delta_{\real}}{\Delta_{t, \real}}|_{Q_1}$ is equal to the corresponding ratio for the root system $\Phi_1$. Thus it suffices to prove that  $Q^+_{\imag}\cap Q_1=Q^+_{1, \imag}$.

Recall that $Q^+_{\imag}$ is the set of vectors in $Q^+$ all of whose Weyl group translates also lie in $Q^+$. Since the Weyl group of $\Phi_1$ is a subgroup of the Weyl group of $\Phi$, a vector in $Q^+_{\imag}\cap Q_1$ must lie in $Q^+_{1, \imag}$. To prove the opposite inclusion, we use \cite[Theorem 5.4]{Kac90}. A positive imaginary vector of $Q_1$ is conjugate under the Weyl group of $\Phi_1$ to a positive antidominant vector in $Q_1$, which is either an imaginary root or a sum of imaginary roots. This vector is still positive and antidominant in $Q$, so it is still either an imaginary root or a sum of imaginary roots in $Q$. Thus $Q^+_{1, \imag}\subset Q^+_{\imag}$.
\end{proof} 

\begin{Proposition}\label{prop:reducible_case}
Let $\Phi$ be a reducible root system and let $\Phi_1$, $\Phi_2$ be root subsystems corresponding to a partition of the simple roots of $\Phi$ into mutually orthogonal subsets. Let $\m_1$, $\m_2$, and $\m$ be defined as in  \eqref{eq:mdefinition} for the root systems $\Phi_1$, $\Phi_2$, and $\Phi$ respectively. Then:
\begin{equation}
\m=\m_1\m_2
\end{equation}
\end{Proposition}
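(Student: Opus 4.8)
The plan is to reduce the statement to the characterization of $\m$ given in equation \eqref{eq:imaginarypart}, exactly as in the proof of Proposition \ref{prop:sublattice_restriction}. Recall that $\m^{-1}$ is the unique power series supported on $Q^+_{\imag}$ whose product with $\frac{\Delta_{\real}}{\Delta_{t,\real}}$ restricts to $1$ on $Q^+_{\imag}$, and likewise for $\m_1^{-1}$ and $\m_2^{-1}$ with respect to $\Phi_1$, $\Phi_2$ and their imaginary cones. So it suffices to show that $\m_1^{-1}\m_2^{-1}$ is supported on $Q^+_{\imag}$ and that $\left(\m_1^{-1}\m_2^{-1}\frac{\Delta_{\real}}{\Delta_{t,\real}}\right)\big|_{Q^+_{\imag}}=1$.

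The key structural input is that since the simple roots split into two mutually orthogonal subsets, we have $Q = Q_1 \oplus Q_2$ and $Q^+ = Q_1^+ \oplus Q_2^+$, the Weyl group is a direct product $W = W_1\times W_2$ with $W_i$ acting trivially on $Q_j$ for $j\neq i$, and the positive real roots decompose as a disjoint union $\Phi^+_{\real} = \Phi^+_{1,\real}\sqcup\Phi^+_{2,\real}$ with $\Phi^+_{i,\real}\subseteq Q_i$. Consequently $\frac{\Delta_{\real}}{\Delta_{t,\real}} = \frac{\Delta_{1,\real}}{\Delta_{1,t,\real}}\cdot\frac{\Delta_{2,\real}}{\Delta_{2,t,\real}}$, a product of a series supported on $Q_1^+$ with one supported on $Q_2^+$. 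First I would establish the combinatorial fact that $Q^+_{\imag} = Q^+_{1,\imag}\oplus Q^+_{2,\imag}$: the inclusion $\supseteq$ is clear since each summand is $W$-invariant and lies in $Q^+$, while for $\subseteq$ one uses that a vector $\lambda = \lambda_1 + \lambda_2$ with $\lambda_i\in Q_i^+$ satisfies $w(\lambda) = w_1(\lambda_1) + w_2(\lambda_2)$ for $w=(w_1,w_2)$, so $w(\lambda)\in Q^+$ for all $w$ forces $w_i(\lambda_i)\in Q_i^+$ for all $w_i$, i.e. $\lambda_i\in Q^+_{i,\imag}$. In particular $\m_1^{-1}\m_2^{-1}$, being a product of something supported on $Q^+_{1,\imag}$ with something supported on $Q^+_{2,\imag}$, is supported on $Q^+_{1,\imag}\oplus Q^+_{2,\imag} = Q^+_{\imag}$, and it is a unit with constant coefficient $1$.

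Then I would verify the defining identity. Write $F = \m_1^{-1}\m_2^{-1}\frac{\Delta_{\real}}{\Delta_{t,\real}} = \left(\m_1^{-1}\frac{\Delta_{1,\real}}{\Delta_{1,t,\real}}\right)\left(\m_2^{-1}\frac{\Delta_{2,\real}}{\Delta_{2,t,\real}}\right)$. Each factor is a power series on $Q^+$ (indeed on $Q_i^+$) whose restriction to $Q^+_{i,\imag}$ is $1$ by \eqref{eq:imaginarypart} applied to $\Phi_i$. I claim this forces $F|_{Q^+_{\imag}} = 1$. To see this, decompose $G_i := \m_i^{-1}\frac{\Delta_{i,\real}}{\Delta_{i,t,\real}}$ as $G_i = 1 + H_i$ where $H_i$ is supported on $Q_i^+\setminus(Q^+_{i,\imag})$ — here I use that $G_i$ has constant term $1$ and that $G_i|_{Q^+_{i,\imag}}=1$ means $H_i$ has no support on nonzero imaginary vectors of $Q_i$. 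Then $F = 1 + H_1 + H_2 + H_1 H_2$. Now $H_1$ is supported on $Q_1^+\setminus Q^+_{1,\imag}\subseteq Q^+\setminus Q^+_{\imag}$ (using $Q^+_{\imag}\cap Q_1 = Q^+_{1,\imag}$, which follows from the decomposition $Q^+_{\imag}=Q^+_{1,\imag}\oplus Q^+_{2,\imag}$ together with $Q^+_{1,\imag}\subseteq Q_1$), similarly $H_2$; and $H_1 H_2$ is supported on sums $\nu_1+\nu_2$ with $\nu_i\in Q_i^+\setminus Q^+_{i,\imag}$, which again cannot be imaginary since its $Q_1$-component $\nu_1$ would then have to be imaginary. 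Hence $F|_{Q^+_{\imag}} = 1$, and by the uniqueness clause of Proposition \ref{prop:relativeinverse} we conclude $\m_1^{-1}\m_2^{-1} = \m^{-1}$, i.e. $\m = \m_1\m_2$.

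The main obstacle — though a mild one — is the support bookkeeping establishing $Q^+_{\imag} = Q^+_{1,\imag}\oplus Q^+_{2,\imag}$ and, relatedly, ruling out that a sum $\nu_1 + \nu_2$ of two non-imaginary vectors (one from each factor) could land in $Q^+_{\imag}$; both hinge on the direct-product structure of $W$ and the compatibility of the $\oplus$-decomposition of $Q$ with the height function and the cones. Everything else is formal manipulation of power series supported on strict cones, for which Propositions \ref{prop:relativeinverse} and \ref{prop:seriestoproduct} provide the needed well-definedness.
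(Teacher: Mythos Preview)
Your proof is correct and follows essentially the same approach as the paper: use the direct product decomposition $W=W_1\times W_2$ to get $Q^+_{\imag}=Q^+_{1,\imag}\oplus Q^+_{2,\imag}$, factor $\frac{\Delta_{\real}}{\Delta_{t,\real}}$ along the real-root decomposition, and then invoke the characterization \eqref{eq:imaginarypart} together with the uniqueness in Proposition~\ref{prop:relativeinverse}. The paper simply asserts that the product of the two relative inverses is the relative inverse for the full cone, whereas you spell out the support bookkeeping via $G_i=1+H_i$; this extra detail is correct and a welcome justification of a step the paper leaves implicit.
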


\begin{proof}
In this case, the Weyl group of $\Phi$ decomposes as a direct product, and therefore the imaginary cone $Q^+_{\imag}=Q^+_{1, \imag} \oplus Q^+_{2, \imag}$. Further, each real root of $\Phi$ is either a real root of $\Phi_1$ or $\Phi_2$, so $\frac{\Delta_{\real}}{\Delta_{t, \real}}$ can be factored into a product over $\Phi_{1, \real}$ and a product over $\Phi_{2, \real}$. Since $\m_1$ is the inverse of the first product relative to the cone $Q^+_{1, \imag}$, and $\m_2$ is the inverse of the second product relative to the cone $Q^+_{2, \imag}$, $\m_1\m_2$ is the inverse of $\frac{\Delta_{\real}}{\Delta_{t, \real}}$ relative to $Q^+_{\imag}$. 
\end{proof}

With these propositions in hand it is possible to prove that $\m$ can always be expressed as a product over positive imaginary roots, as in Theorem \ref{thm:affineformula}.

\begin{Theorem} \label{thm:roots_only}
For $\lambda \in Q^+$ not a root, and $n\in \ZZ$, we have $m(\lambda, n)=0$. 
\end{Theorem}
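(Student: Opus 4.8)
The plan is to reduce the statement to the already-understood affine and finite cases by exhibiting, for each non-root $\lambda \in Q^+$ with $m(\lambda,n) \neq 0$ for some $n$, a rank-$2$ (or finite, or affine) root subsystem on which $\lambda$ becomes imaginary and to which the vanishing has already been verified — but the more robust route is to argue directly with the Berman-Moody formula \eqref{eq:multformula} of Theorem \ref{thm:BermanMoody}. First I would dispose of the easy reductions. If $\lambda$ is not imaginary at all (i.e.\ $\lambda \notin Q^+_{\imag}$ and $\lambda$ not a real root), then by our extension of the functions $m(\lambda,n)$ in \S\ref{sect:peterson} we set $m(\lambda,n)=0$ for all $n$ by fiat, so there is nothing to prove; the content is entirely in the case $\lambda \in Q^+_{\imag}$ but $\lambda$ not an imaginary root. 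Here $\lambda$ is a \emph{proper sum} of positive imaginary roots, and we want to show the polynomial $m_\lambda(t)$ vanishes identically.

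The key step is to use Proposition \ref{prop:sublattice_restriction} together with the structure of $Q^+_{\imag}$. Since $\lambda$ is imaginary, it is $W$-conjugate to an antidominant vector $\lambda'$ of the same height, and by \cite[Theorem 5.4]{Kac90} an antidominant imaginary vector has connected support on the Dynkin diagram — more precisely, I want to argue that an antidominant vector in $Q^+_{\imag}$ which is not a root must have support properly containing that of some imaginary root, or can be written so that it is \emph{decomposable} in a way incompatible with being a root. Then Proposition \ref{prop:sublattice_restriction} (restriction to the sublattice spanned by the support of $\lambda'$) and, if the support is disconnected, Proposition \ref{prop:reducible_case} reduce us to the case where $\Phi$ is \emph{indecomposable of indefinite type with full support}. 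In that case the argument is cleanest via \eqref{eq:multformula}: every $\kappa \mid \lambda$ is again imaginary, and the inner sum ranges over vector partitions $\underline{\kappa} \in \mathrm{Par}(\kappa)$ for which each part $\kappa_i \notin Q^+_{\imag}$ (otherwise $P_{\kappa_i}(t)=0$, as noted after the proof of Theorem \ref{thm:BermanMoody}); so $m_\lambda(t)$ is expressed purely in terms of the polynomials $P_{\kappa_i}(t)$ attached to non-imaginary vectors, and one must show this combination vanishes when $\lambda$ is not a root.

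The main obstacle — and where I expect the real work to lie — is precisely this last vanishing. The specialization at $t=0$ is exactly Berman-Moody's original Theorem 2 together with the classical fact that the Berman-Moody formula returns $0$ on non-roots (Kac's description of the imaginary roots via the fundamental domain); so at $t=0$ the statement is known. To get the full polynomial identity I would try to show that the $t$-deformed formula \eqref{eq:multformula} can be obtained from the $t=0$ version by a ``motivic'' substitution that preserves vanishing: each $P_{\kappa_i}(t)$ is, up to the sign conventions and the factor structure in \eqref{eq:plambda}, a deformation of the integer $P_{\kappa_i}(0)$ counting the same combinatorial data, and the algebraic identity forcing $m_\lambda(0)=0$ should lift verbatim because it is an identity of \emph{formal} power-series manipulations (logarithmic derivative, geometric series, Möbius inversion) that never used $t=0$. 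Concretely, I would reverse the logic of the proof of Theorem \ref{thm:BermanMoody}: the vanishing $m_\lambda(t)\equiv 0$ is equivalent to the assertion that $e^\lambda$ does not appear in $\mathrm{Im}(\Sigma^{-1}\Delta_{\real}/\Delta_{t,\real})$ beyond what the imaginary \emph{roots} contribute — which is just a restatement of \eqref{eq:imaginarypart} combined with the product form \eqref{eq:mprodform_Peterson} and the fact (to be proven, and this is the crux) that the imaginary factors of $\m$ are supported on roots. I would therefore aim to prove ``$\m$ is a product over imaginary \emph{roots}'' directly from \eqref{eq:imaginarypart} by induction on height, using Proposition \ref{prop:relativeinverse}: at a non-root imaginary $\lambda$, the relative inverse construction produces $m_\lambda(t)$ as a polynomial combination of lower factors, and I would show by the support/connectedness structure of $Q^+_{\imag}$ (Kac, Theorem 5.4) that this combination is forced to be $0$ — mirroring Berman-Moody's original inductive argument but tracking the $t$-grading throughout. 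The delicate point is handling imaginary $\lambda$ whose \emph{every} proper sub-support is still of infinite type, where the reduction Propositions do not simplify the situation and one must run the Berman-Moody induction in full generality with the extra variable $t$ in tow.
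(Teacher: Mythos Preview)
Your proposal contains the right ingredients --- $W$-invariance of $m(\lambda,n)$, reduction to the antidominant representative, and the use of Propositions \ref{prop:sublattice_restriction} and \ref{prop:reducible_case} --- but you have misread the content of \cite[Theorem 5.4]{Kac90}, and this misreading sends you on a long detour that is both unnecessary and left incomplete.

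You write that ``an antidominant imaginary vector has connected support.'' This is backwards. Kac's Theorem 5.4 characterizes the positive imaginary \emph{roots} as the $W$-translates of the set $K$ of nonzero antidominant elements of $Q^+$ with \emph{connected} support. Consequently, if $\lambda \in Q^+_{\imag}$ is \emph{not} a root, its antidominant representative $\lambda'$ lies outside $K$; since $\lambda'$ is nonzero and antidominant, the only way this can happen is for $\supp(\lambda')$ to be \emph{disconnected}. At that point Propositions \ref{prop:sublattice_restriction} and \ref{prop:reducible_case} finish the argument immediately: restrict to the root subsystem supported on $\supp(\lambda')$, factor $\m$ over its connected components, and observe that $\lambda'$ --- having nonzero projection to at least two components --- cannot appear as a factor of either $\m_1$ or $\m_2$. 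Hence $m(\lambda',n)=0$, and by $W$-invariance $m(\lambda,n)=0$.

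The ``remaining case'' you worry about --- $\Phi$ indecomposable with $\lambda$ antidominant of full connected support --- is therefore empty for non-roots. The entire second half of your proposal (lifting the Berman--Moody vanishing from $t=0$ to arbitrary $t$, reversing the logarithmic-derivative argument, running a height induction with the extra variable) is an attack on a problem that does not exist. The paper's actual proof is three sentences once Kac's theorem is stated correctly.
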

\begin{proof}
For $\lambda \notin Q^+_{\imag}$, this is clear from the construction of $\m$. For $\lambda \in Q^+_{\imag}$ not a root, by \cite[Theorem 5.4]{Kac90}, $\lambda$ is $W$-conjugate to some antidominant vector whose support is disconnected in the Dynkin diagram of $\Phi$. Thus it suffices to consider prove the statement when $\lambda$ has disconnected support. By Proposition \ref{prop:sublattice_restriction}, $m(\lambda, n)$ does not change if we restrict to the root subsystem corresponding to $\supp(\lambda)$. By Proposition \ref{prop:reducible_case}, the series $\m$ on this root subsystem is a product with no factors at the vector $\lambda$. Thus, $m(\lambda, n)=0$.
\end{proof}  

The following theorem generalizes \eqref{eq:1} to assert that the generating polynomial $m_{\lambda}(t) = \sum_{n=0}^{N_{\lambda}} m(\lambda, n) t^n$ is divisible by $(1-t)^2$.

\begin{Theorem}\label{thm:order2vanishing}
For all nonzero $\lambda \in Q^+$, $m_{\lambda}(t)$ has a root at $t=1$. For nonzero $\lambda \in Q^+_{\imag}$, this is a double root.
\end{Theorem}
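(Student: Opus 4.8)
The plan is to derive both statements from the generalized Berman–Moody formula, Theorem~\ref{thm:BermanMoody}, specifically from the ordered version \eqref{eq:orderedmultformula}, by analyzing the behavior of each ingredient at $t=1$. First I would record the key evaluation: for every real Kostant partition one has, from \eqref{eq:plambda}, that $P_\lambda(t)$ carries a factor of $(1-t^{-2})^{|\mathrm{supp}(\underline k)|}$, and since every $\lambda\neq 0$ admits at least one real Kostant partition with nonempty support, each $P_{\kappa_i}(t)$ vanishes at $t=1$; hence each product $\prod_i P_{\kappa_i}(t^{\lambda/\kappa})$ in \eqref{eq:orderedmultformula} vanishes at $t=1$ (note $t^{\lambda/\kappa}=1$ when $t=1$). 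This already gives $m_\lambda(1)=0$ for all nonzero $\lambda\in Q^+$, which is the first assertion. The point worth double-checking is that the apparent poles coming from $P(t)^{-1}$ in \eqref{eq:plambda} and from the $\mu(\lambda/\kappa)$-weighted sum in \eqref{eq:orderedmultformula} do not cancel these zeros: since $m_\lambda(t)$ is genuinely a polynomial (this is part of Proposition~\ref{prop:seriestoproduct} together with the fact that $\m\in\Omega_1$), the formula's right-hand side is a polynomial, and its value at $t=1$ is obtained by continuity from the factored form, where the zeros from the $P_{\kappa_i}$ survive.

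For the second, sharper statement I would need to show that for $\lambda\in Q^+_{\imag}$ the order of vanishing at $t=1$ is at least two. Here I would use the observation made right after the proof of Theorem~\ref{thm:BermanMoody}: when $\lambda\in Q^+_{\imag}$, every divisor $\kappa\mid\lambda$ also lies in $Q^+_{\imag}$, and the only vector partitions $\underline\kappa$ of $\kappa$ that contribute to \eqref{eq:orderedmultformula} are those with \emph{all} components $\kappa_i\notin Q^+_{\imag}$ — equivalently, partitions of an imaginary vector into non-imaginary (in particular nonzero) vectors, which forces $|\underline\kappa|\geq 2$. Since each non-imaginary $\kappa_i$ is nonzero, each factor $P_{\kappa_i}(t)$ vanishes at $t=1$ by the first paragraph, so the product $\prod_{i=1}^{|\underline\kappa|}P_{\kappa_i}(t)$ has a zero of order at least $|\underline\kappa|\geq 2$ at $t=1$. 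Summing over $\kappa$ and over ordered partitions (with the factor $\tfrac{1}{\mathrm{ht}(\lambda)}\mu(\lambda/\kappa)\,\mathrm{ht}(\kappa_1)$, which is regular at $t=1$) preserves divisibility by $(1-t)^2$, giving the double root.

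The main obstacle I anticipate is the bookkeeping around whether the rational expressions on the right-hand side of \eqref{eq:orderedmultformula} are literally regular at $t=1$ before one can read off the order of the zero, since $P(t)=\tfrac{1+t}{1-t}$ in rank two (and more generally $P(t)$ can have a pole at $t=1$), and $P_{\kappa_i}(t)$ as defined by \eqref{eq:plambda} involves division by $P(t)$. The cleanest way around this is to argue entirely with the polynomial $m_\lambda(t)$ itself: we know a priori that $m_\lambda(t)\in\ZZ[t]$, so it suffices to produce the factorization $(1-t)^2\mid m_\lambda(t)$ after clearing denominators once and for all. Concretely, I would multiply \eqref{eq:orderedmultformula} through by a suitable power of $(1-t)$ times $\mathrm{ht}(\lambda)$ to obtain an identity of polynomials (or of power series in $\ZZ[[t]]$), track the exact power of $(1-t)$ dividing each $P_{\kappa_i}$ — here the relevant fact is that $P_{\mathrm{supp}(\underline k)}(t)$ itself, before dividing by $P(t)$, is visibly divisible by $(1-t)$ in the rank-two computation \eqref{eq:13}, and in general one uses that $P_{\mathrm{supp}(\underline k)}(t)/P(t)$ combined with the $(1-t^{-2})$ factors yields a genuine zero — and conclude. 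Alternatively, and perhaps more robustly, one can sidestep \eqref{eq:plambda} altogether: specialize the defining identity \eqref{eq:identity} or the generating identity $\Sigma=\Delta_{\real}/(\m\Delta_{t,\real})$ near $t=1$ directly, using that $\Delta_{t,\real}\to\Delta_{\real}$ and $\m\to 1$ as $t\to 1$ while tracking the leading behavior — but I expect the Berman–Moody route above to be the shortest, with the denominator-clearing step being the only genuinely delicate point.
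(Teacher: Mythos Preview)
Your overall architecture is the same as the paper's: reduce both assertions to the vanishing $P_{\kappa_i}(1)=0$, then feed this into the Berman--Moody formula \eqref{eq:multformula}, and for imaginary $\lambda$ use the observation that every contributing partition has $|\underline\kappa|\ge 2$. That part is fine and matches the paper exactly.

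The gap is in your justification of $P_\lambda(1)=0$. You try to read this off from \eqref{eq:plambda} by saying each summand ``carries a factor of $(1-t^{-2})^{|\mathrm{supp}(\underline k)|}$.'' But \eqref{eq:plambda} also has the prefactor $P(t)^{-1}$ and the factor $P_{\mathrm{supp}(\underline k)}(t)$, both of which are power series in $t$ that are typically singular at $t=1$ in infinite type (e.g.\ in rank two $P(t)=\tfrac{1+t}{1-t}$ and $P_{\mathrm{supp}(\underline k)}(t)=\tfrac{t^N}{1-t}$). So the presence of a $(1-t^{-2})$ in each summand does not, by itself, yield $P_\lambda(1)=0$; you would need to control the orders of all three factors at $t=1$, and you have not done so. Your fallback alternative (``use $\m\to 1$ as $t\to 1$'') is circular: that is a \emph{consequence} of the theorem, not an input.

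The paper's fix is a clean two-line support argument that you nearly write down. Evaluate the generating identity $\Sigma=\dfrac{\Delta_{\real}}{\m\,\Delta_{t,\real}}$ at $t=1$: the right-hand side becomes $\m^{-1}|_{t=1}$, which is supported on $Q^+_{\imag}$ since $\m$ is. On the other hand, for every nonzero $\lambda\in Q^+_{\imag}$ one already knows $P_\lambda(t)\equiv 0$ (no $w$ satisfies $w^{-1}(\lambda)<0$, so every $P_{\mathrm{supp}(\underline k)}(t)$ in \eqref{eq:plambda} is identically zero). These two facts together force $P_\lambda(1)=0$ for \emph{all} nonzero $\lambda\in Q^+$, with no analysis of poles required. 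Once you have this, your Berman--Moody argument for both the simple and the double root goes through verbatim.
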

\begin{proof}
We first show that for all nonzero $\lambda \in Q^+$, the polynomial $P_{\lambda}(t)$ has a root at $t=1$. If we evaluate $\Sigma$ at $t=1$, we have:
\begin{equation}
\m|_{t=1} = \sum_{\lambda \in Q^+} P_{\lambda}(1) e^{\lambda}
\end{equation}
The left side is supported on $ Q^+_{\imag}$, but on the right we have $P_{\lambda}(t)=0$ for all nonzero $\lambda \in Q^+_{\imag}$. Therefore $P_{\lambda}(1)=0$ for all nonzero $\lambda \in Q^+$.

Next, we apply formula \eqref{eq:multformula}:
\begin{equation}
m_{\lambda}(t) = \sum_{\kappa | \lambda} \mu\left(\lambda/\kappa\right)\left(\frac{\lambda}{\kappa}\right)^{-1} \sum_{\underline{\kappa}\in \text{Par}(\kappa)} (-1)^{|\underline{\kappa}|}\frac{B(\underline{\kappa})}{|\underline{\kappa}|} \prod_{i=1}^{|\underline{\kappa}|} P_{\kappa_i}(t^{\lambda/\kappa})
\end{equation}
As we have shown, each $P_{\kappa_i}(t^{\lambda/\kappa})$ will have a root at $t=1$, so the full expression has a root at $t=1$. Moreover, if $\lambda$ is an imaginary vector, then each $\kappa$ dividing $\lambda$ is imaginary as well. The only vector partitions of $\kappa$ which contribute to the formula are partitions into vectors outside the cone $Q^+_{\imag}$. In particular, such partitions must have at least two components. Therefore each product $\prod_{i=1}^{|\underline{\kappa}|} P_{\kappa_i}(t^{\lambda/\kappa})$ must have at least a double root at $t=1$, so the full expression has a double root.
\end{proof}

Therefore, for each imaginary root $\lambda$, we can define $\chi_\lambda = \frac{\m_\lambda(t)}{(1-t)^2}$, which by Theorem \ref{thm:order2vanishing} is an element of $\ZZ[t]$. We believe that $\chi_\lambda$ is the more fundamental object, as in the affine case. Some examples of this polynomial are tabulated in Appendix \ref{sect:dataandconjectures}.


\bibliographystyle{amsalpha}
\bibliography{papers}

\appendix

\section{Examples of the polynomial $\chi_{\lambda }$}\label{sect:dataandconjectures}
At the end of \S \ref{sect:ordertwovanishing} we set $\chi_\lambda(t) = \frac{m_\lambda(t)}{(1-t)^2}$ for imaginary roots $\lambda$. Theorem \ref{thm:order2vanishing} tells us that $\chi_\lambda \in \ZZ[t]$. In this section we tabulate certain polynomials $\chi_\lambda(t)$, computed with the algorithm of \S \ref{sect:peterson}. 

\noindent For the rank 2 hyperbolic root system with Cartan matrix $\twomat{2}{-3}{-2}{2}$, we have:
\begin{table}[h!]
\centering
\begin{tabular}{cp{35pc}}
$\lambda$ & $\chi_{\lambda}$ \\
$\left(1, 1\right)$ & $1$ \\
$\left(2, 2\right)$ & $-t + 1$ \\
$\left(3, 2\right)$ & $t^{2} + 2$ \\
$\left(3, 3\right)$ & $-t^{3} - 2 t + 2$ \\
$\left(4, 3\right)$ & $t^{4} - t^{3} + 2 t^{2} - 3 t + 3$ \\
$\left(4, 4\right)$ & $-t^{5} + t^{4} - 2 t^{3} + 3 t^{2} - 6 t + 3$ \\
$\left(5, 4\right)$ & $t^{6} - 2 t^{5} + 4 t^{4} - 6 t^{3} + 9 t^{2} - 9 t + 6$ \\
$\left(5, 5\right)$ & $-t^{7} + t^{6} - 4 t^{5} + 6 t^{4} - 10 t^{3} + 13 t^{2} - 13 t + 7$ \\
$\left(6, 4\right)$ & $t^{6} - 4 t^{5} + 5 t^{4} - 8 t^{3} + 11 t^{2} - 13 t + 6$ \\
$\cdots$ & $\cdots$ \\
$\left(10, 9\right)$ & $t^{16} - 7 t^{15} + 29 t^{14} - 91 t^{13} + 248 t^{12} - 584 t^{11} + 1197 t^{10} - 2170 t^{9} + 3505 t^{8} - 5039 t^{7} + 6437 t^{6} - 7253 t^{5} + 7042 t^{4} - 5618 t^{3} + 3405 t^{2} - 1372 t + 272$ 
\end{tabular}
\label{table:Cartan2322}
\end{table}

\noindent For the symmetric rank 2 hyperbolic root system with Cartan matrix $\twomat{2}{-3}{-3}{2}$:
\begin{table}[h!]
\begin{tabular}{cp{35pc}}
$\lambda$ & $\chi_{\lambda}$ \\
$\left(1, 1\right)$ & $1$ \\
$\left(2, 2\right)$ & $-2 t + 1$ \\
$\left(2, 3\right)$ & $t^{2} - t + 2$ \\
$\left(3, 2\right)$ & $t^{2} - t + 2$ \\
$\left(3, 3\right)$ & $-2 t^{3} + 3 t^{2} - 4 t + 3$ \\
$\left(3, 4\right)$ & $t^{4} - 3 t^{3} + 6 t^{2} - 6 t + 4$ \\
$\left(4, 3\right)$ & $t^{4} - 3 t^{3} + 6 t^{2} - 6 t + 4$ \\
$\left(4, 4\right)$ & $-2 t^{5} + 7 t^{4} - 12 t^{3} + 17 t^{2} - 16 t + 6$ \\
$\left(4, 5\right)$ & $t^{6} - 5 t^{5} + 15 t^{4} - 26 t^{3} + 30 t^{2} - 23 t + 9$ \\
$\left(4, 6\right)$ & $t^{6} - 8 t^{5} + 19 t^{4} - 31 t^{3} + 36 t^{2} - 28 t + 9$ \\
$\left(5, 4\right)$ & $t^{6} - 5 t^{5} + 15 t^{4} - 26 t^{3} + 30 t^{2} - 23 t + 9$ \\
$\left(5, 5\right)$ & $-2 t^{7} + 9 t^{6} - 30 t^{5} + 58 t^{4} - 82 t^{3} + 77 t^{2} - 50 t + 16$ \\
$\left(6, 4\right)$ & $t^{6} - 8 t^{5} + 19 t^{4} - 31 t^{3} + 36 t^{2} - 28 t + 9$ \\
$\cdots$ & $\cdots$ \\
$\left(10, 9\right)$ & $t^{16} - 15 t^{15} + 135 t^{14} - 811 t^{13} + 3535 t^{12} - 11729 t^{11} + 30615 t^{10} - 64282 t^{9} + 110096 t^{8} - 154852 t^{7} + 178868 t^{6} - 168420 t^{5} + 127110 t^{4} - 74539 t^{3} + 32094 t^{2} - 9070 t + 1267$ \\
\end{tabular}
\label{table:Cartan2332}
\end{table}

\noindent For the Feingold-Frenkel rank 3 hyperbolic root system with Cartan matrix $\threemat{2}{-2}{0}{-2}{2}{-1}{0}{-1}{2}$:
\begin{table}[h!]
\begin{tabular}{cp{35pc}}
$\lambda$ & $\chi_{\lambda}$ \\
$\left(1, 1, 0\right)$ & $1$ \\
$\left(2, 2, 0\right)$ & $1$ \\
$\left(2, 2, 1\right)$ & $2$ \\
$\left(3, 3, 0\right)$ & $1$ \\
$\left(3, 3, 1\right)$ & $-t + 3$ \\
$\left(3, 4, 2\right)$ & $-2 t + 5$ \\
$\left(4, 4, 0\right)$ & $1$ \\
$\left(4, 4, 1\right)$ & $-2 t + 5$ \\
$\left(4, 4, 2\right)$ & $-t^{2} - 6 t + 7$ \\
$\left(4, 5, 2\right)$ & $t^{3} + t^{2} - 9 t + 11$ \\
$\left(5, 5, 0\right)$ & $1$ \\
$\left(5, 5, 1\right)$ & $-5 t + 7$ \\
$\left(5, 5, 2\right)$ & $2 t^{3} + 2 t^{2} - 17 t + 15$ \\
$\left(5, 6, 2\right)$ & $-t^{4} + 3 t^{3} + 6 t^{2} - 26 t + 22$ \\
$\left(5, 6, 3\right)$ & $-3 t^{4} + 6 t^{3} + 13 t^{2} - 43 t + 30$ \\
$\left(6, 6, 0\right)$ & $1$ \\
$\left(6, 6, 1\right)$ & $t^{2} - 8 t + 11$ \\
$\left(6, 6, 2\right)$ & $-2 t^{4} + 5 t^{3} + 11 t^{2} - 43 t + 30$ \\
$\left(6, 6, 3\right)$ & $-6 t^{4} + 8 t^{3} + 23 t^{2} - 65 t + 42$ \\
$\left(6, 7, 2\right)$ & $-5 t^{4} + 6 t^{3} + 22 t^{2} - 63 t + 42$ \\
$\left(7, 7, 0\right)$ & $1$ \\
$\left(7, 7, 1\right)$ & $2 t^{2} - 15 t + 15$ \\
\end{tabular}
\label{table:Cartan220221012}
\end{table}

\noindent Notice that the phenomenon of $\chi_\lambda = 1$ for $\lambda=(n, n, 0)$ is an instance of Proposition \ref{prop:sublattice_restriction} and Theorem \ref{thm:affineformula}, as these are roots of the embedded affine root subsystem $A_1^{(1)}$.



\end{document}